\newtheorem{thm}{Theorem}[section]
\newtheorem{cor}[thm]{Corollary}
\newtheorem{lem}[thm]{Lemma}
\newtheorem{prop}[thm]{Proposition}
\newtheorem{exa}[thm]{Example}
\theoremstyle{definition}
\newtheorem{dfn}[thm]{Definition}
\theoremstyle{remark}
\numberwithin{equation}{section}
\begin{document}

\title[]{The group $G_{n}^{2}$ with a parity and with points}%
\author{S.Kim}

\address{S.Kim, Bauman Moscow State Technical University,}%
\email{ksj19891120@gmail.com}%


\maketitle

\begin{abstract}
In~\cite{Ma} Manturov studied groups $G_{n}^{k}$ for fixed integers $n$ and $k$ such that $k<n$. In particular, $G_{n}^{2}$ is isomorphic to the group of free braids of $n$-stands. In~\cite{KiMa} Manturov and the author studied an invariant valued in free groups not only for free braids but also for free tangles, which is derived from the group $G_{n}^{2}$. On the other hands, in~\cite{FeMa} Manturov and Fedoseev studied groups $Br_{2}^{n}$ of virtual braids with parity and groups $Br_{d}^{n}$ of virtual braids with dots. They showed that there is homomorphism from $Br_{2}^{n}$ to  $Br_{d}^{n}$ and proved the following statement:

{\it if two braids with parity are equivalent as braids with dots, then they are equivalent as braids with parity.}

By the statement it is deduced that a parity of the braid can be represented by a geometric object, dots on strands. 

In this paper we study $G_{n}^{2}$ with structures, which are corresponded to parity and points on a braid, which are denoted by $G_{n,p}^{2}$ and $G_{n,d}^{2}$, respectively. In section 3, it is proved that there is a monomorphism from $G_{n}^{2}$ to $G_{n,p}^{2}$ and that there is a monomorphism from $G_{n,p}^{2}$ to $G_{n,d}^{2}$. By the homomorphism  from $G_{n}^{2}$ to $G_{n,p}^{2}$, it can be deduced that a given parity of a braid has geometric representation, which is the number of points on the braid. In section 4, it can be proved that for each element $\beta$ in $G_{n,d}^{2}$, an element in $G_{n+1}^{2}$ obtained by adding another strand by tracing points on $\beta$. That is, a parity of free braid of $n$-stands is represented not only by points on strands, but also by an $n+1$-th strand. Conversely, for a braid of $n+1$-strands, a braid of $n$-strands is obtained by deleting one strand of the braid of $n+1$-strand. Finally, we will simply discuss about the way to adjust the previous observations to know whether a given braid is Brunnian or not. 
\end{abstract}

\section{Introduction}

In~\cite{Ma}, Manturov studied groups $G_{n}^{k}$ given by the group presentation, from which many invariants for knots and dynamical systems are derived. In~\cite{MaNi}, V.O.Manturov and I.M.Nikonov found that pure braids groups are embedded in $G_{n}^{k}$ and constructed invariants valued in free product of cyclic groups. Tangles are significant generalization of braids and we have the following question : is it possible to generalize invariants, which are asserted, to the case of tangles. By $G_{n}^{2}$ analogously tangles can be studied. In~\cite{KiMa}, the invariants valued in free groups are extended to the case of free tangles. The groups $G_{n}^{2}$ which are simplest case of the $G_{n}^{k}$ are known as free braid groups of $n$-strands, which are known as other names, see~\cite{Ba} and \cite{BaBeDa}.

On the other hands, in~\cite{FeMa} V.O.Manturov and D.A.Fedoseev studied virtual braids group $Br_{2}^{n}$ with parity and virtual braids group $Br_{d}^{n}$ with dots. They showed that there is a homomorphism $h$ from $Br_{2}^{n}$ to $Br_{d}^{2}$. And it was shown that $Br_{2}^{n}$ is homomorphically embedded in $Br_{d}^{n}$ by showing the following statement:

{\it if two braids with parity are equivalent as braids with dots, then they are equivalent as braids with parity.}

It means that the parity of braids has geometric meaning; the number of points on braids.

In this paper,  analogously we will generalizethe groups $G_{n}^{2}$ with structures; parity and points, which are denoted by $G_{n,p}^{2}$ and $G_{n,d}^{2}$, respectively. We are interested in the following question:\\
{\it two objects are equivalent in a lager category, then are they equivalent as themselves?}\\
Which means that a category is embedded in the larger category. For example, the following statements are related with the above problem.
\begin{center}
\begin{enumerate}
\item if two classical knots are equivalent as virtual knots, then they are equivalent as classical knots.
\item if two classical braids are equivalent as virtual briads, then they are equivalent as classical briads.
\end{enumerate}
\end{center}
In~\cite{GoPoVi} the first statement is proved and the second statement is proved in~\cite{FeRiRo}.

As the above question, we showed the following two statements:
\begin{center}
\begin{enumerate}
\item if two element in $G_{n}^{2}$ are equivalent in $G_{n,p}^{2}$, then they are equivalent in $G_{n}^{2}$.
\item if two element in $G_{n,p}^{2}$ are equivalent in $G_{n,d}^{2}$, then they are equivalent in $G_{n,p}^{2}$.
\end{enumerate}
\end{center}

From the above statements, the followings are deduced: Since $G_{n}^{2}$ is isomorphic to group of free braids with $n$ strands, parity can be generalized to the case of free braids with respect to $G_{n,p}^{2}$. Moreover, parity is defined abstractly, but it can be geometrically presented by points.
On the other hands point on a braid of $n$ strands means 'liking' between $(n+1)$-th strand and others. Moreover points on the braid of $n$-strands can be considered as {\it traces} of $n+1$-th strand and a homomorphism from $G_{n,d}^{2}$ to $G_{n+1}^{2}$ can be defined by adding a new strand by `following the traces'(points) of it. By the above observations, we can find a homomorphism from $G_{n,p}^{2}$ to $G_{n+1}^{2}$ and, roughly speaking, each value of parity of crossings of the braid of $n$-strands can be represented by the number of `linking' between $n+1$-strand and the other two strands, in which a crossing is.
Conversely a braid with points can be obtained from a braid of $n+1$ strands by deleting one strand and placing points on places, on which deleted stand was.And a braid with parity can be obtained from a braid with points by counting the number of points. Then two braids can be distinguished by braids with parity.
In section 2, we simply remind basic definitions and simple results. In section 3, we consider the following question:\\
{\it two objects are equivalent in a larger category, then are they equivalent as themselves?}\\
in the case of $G_{n}^{2}$, $G_{n,p}$ and $G_{n,d}^{2}$. In section 4, relations between $G_{n,d}^{2}$ and $G_{n+1}$ are studied. And we will show geometric meanings and examples of braids of $n+1$ strand, which are distinguished from trivial braids by braids with parity.

\section{Basic definitions}
Firstly, we recall basic definitions and introduce simple results.
\begin{dfn}\cite{Ma2}
Let $G_{n}^{2}$ be the group given by the presentation generated by $\{ a_{ij}~|~ \{i,j\} \subset \{1, \dots, n\}, i < j \}$ with the following relations:

\begin{enumerate}
\item $a_{ij}^{2} = 1$ for all $i \neq j$, 
\item $a_{ij}a_{kl} = a_{kl}a_{ij}$ for distinct $i,j,k,l$,
\item $a_{ij}a_{ik}a_{jk} = a_{jk}a_{ik}a_{ij}$ for distinct $i,j,k$.
\end{enumerate}
\end{dfn}

In~\cite{Ma2} Manturov showed that {\it colored (pure) free braids} can be presented by elements in $G_{n}^{2}$. In fact, the relations for $G_{n}^{2}$ are related with Reidemeister moves for free braids.
\begin{dfn}
A {\it free braid diagram} is a graph inside a rectangle $\mathbb{R} \times [0,1]$ with the following properties:
\begin{enumerate}
\item The graph vertices of valency $1$ are the points $[i,0]$ and $[i,1]$, $i = 1, \cdots, n$.
\item All other  vertices are $4$-valent vertices. 
\item For each $4$-valent vertex, edges are split by two pairs, two edges in which are called {\it opposite edeges}. The opposite edges in such vertices are at the angle of $\pi$.
\item The braid strands monotonously go down.
\end{enumerate}
For a free braid diagram, if $[i,0]$ and $[i,1]$ are included in the same strand for every $i \in \{1, \cdots n\}$, then the diagram is called {\it a pure free braid diagram}.
\end{dfn}
A {\it colored free braid diagram} is a free braid diagram such that each component is enumerated. If it is a pure free braid diagram, we may assume that the enumeration agrees with $x$-terms of $1$-valent vertices of the strands.

\begin{dfn}
{\it A free braid of $n$ strand} is an equivalence class of free diagrams of n-strand braid under Artin moves for free braids in Fig.~\ref{movesArtin}.

\begin{figure}[h!]
\begin{center}
 \includegraphics[width =12cm]{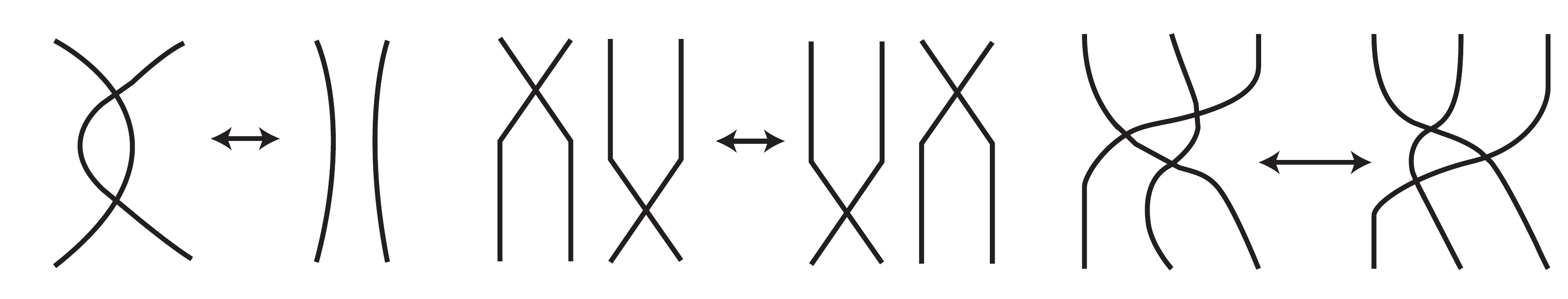}

\end{center}

 \caption{Artin moves for free braids}\label{movesArtin}
\end{figure}
\end{dfn}

This moves are related with relations of $G_{n}^{2}$ in order. Now we construct a function from the set $PB_{n}$ of colored pure free braids with $n$ components to $G_{n}^{2}$. Let $\{\epsilon_{k}\}_{k=1}^{m} \subset (0,1)$ be a set of values such that a $4$-valent vertex is placed in $\mathbb{R} \times \{\epsilon_{k}\}$ and $\epsilon_{k} < \epsilon_{k+1}$ for each $k = 1, \cdots m-1$. Without loss of generality, we assume that there is the only one vertex on $\mathbb{R} \times \{\epsilon_{k}\}$ for each $k$. Let $c_{k}$ be the $4$-valent vertex on $\mathbb{R} \times \{\epsilon_{k}\} $. Define a function $\iota$ from $PB_{n}$ to $G_{n}^{2}$ as the following: For each vertex $c_{k}$, if it contains edges in $i_{k}$-th and $j_{k}$-th components, then give a generator $a_{i_{k}j_{k}}$ in $G_{n}^{2}$. Define $\iota(\beta) = a_{i_{1}j_{1}} \cdots a_{i_{m}j_{m}}$, see Fig.~\ref{exa-freebraidword}.


\begin{figure}[h!]
\begin{center}
 \includegraphics[width =8cm]{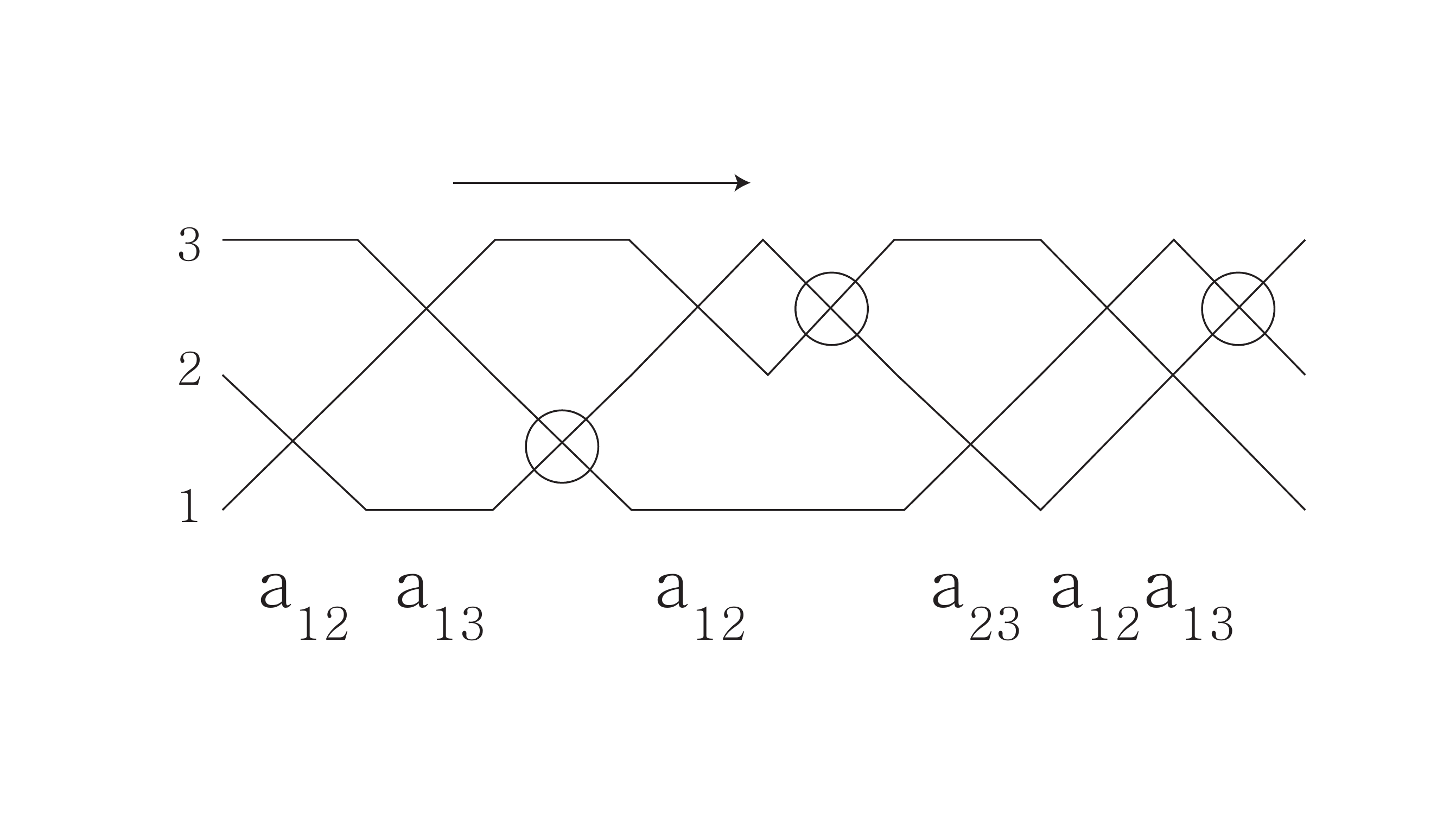}

\end{center}

 \caption{Example of a word for a free braid}\label{exa-freebraidword}
\end{figure}

\begin{prop}\cite{Ma2}
A function $\iota$ from $PB_{n}$ to $G_{n}^{2}$ is well-defined.
\end{prop}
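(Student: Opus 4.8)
The plan is to show that the element $\iota(\beta) \in G_{n}^{2}$ depends neither on the auxiliary choice of generic levels $\{\epsilon_{k}\}$ used to read off the vertices, nor on the choice of diagram representing the free braid. Since $\iota$ is first specified on a diagram equipped with such a height function, I would establish well-definedness in two stages: (i) independence of the level function for a fixed diagram, and (ii) invariance under each of the three Artin moves of Fig.~\ref{movesArtin}.

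For stage (i), observe that two admissible choices of levels differ only in the relative vertical order they assign to vertices occurring at (nearly) the same height. Two vertices $c_{p}, c_{q}$ can be brought to a common height by an isotopy preserving monotonicity only when their incident strands are disjoint, i.e. they involve four distinct components; indeed, if the two vertices shared a component, the monotonicity of that strand would force them to lie at distinct heights, so no reordering of them is possible. Interchanging the order of two such independent vertices replaces a subword $a_{ij}a_{kl}$ by $a_{kl}a_{ij}$, and these are equal in $G_{n}^{2}$ by relation (2). Hence every admissible level function produces the same element of $G_{n}^{2}$, and $\iota$ is well-defined on the level of diagrams.

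For stage (ii), I would verify that each Artin move alters the associated word only by substituting a subword by one equal to it under the defining relations. The move cancelling a pair of crossings between the same two strands deletes a factor $a_{ij}a_{ij}$, which is trivial by relation (1); the far-commutativity move interchanges $a_{ij}a_{kl}$ with $a_{kl}a_{ij}$ for distinct indices, governed by relation (2); and the triangle (third) move replaces $a_{ij}a_{ik}a_{jk}$ by $a_{jk}a_{ik}a_{ij}$, which is exactly relation (3). Because the moves are local, only the vertices inside the region of the move are affected while every other letter of the word is untouched; thus $\iota$ takes the same value on any two diagrams related by a single move, and by transitivity on any two equivalent diagrams, so it descends to the set $PB_{n}$ of equivalence classes.

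The main obstacle is the careful bookkeeping in stage (ii): one must track precisely which components pass through each vertex before and after a move and confirm that the induced change in the word is exactly one application of the correct relation, in particular matching the orderings in relation (3) to the geometry of the triangle move and checking the finitely many variants of each move obtained by permuting the participating strands. Verifying this correspondence move-by-move is where the content lies, since the algebraic step is immediate once the geometric match is in place.
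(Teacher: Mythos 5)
Your proposal is correct and is precisely the argument the paper has in mind: the paper gives no proof of its own (the proposition is quoted from \cite{Ma2}, with only the remark that the Artin moves ``are related with relations of $G_{n}^{2}$ in order''), and your two-stage verification makes that remark precise. Stage (i) (reordering of vertices at comparable heights is possible only for vertices on four distinct strands, handled by relation (2)) together with stage (ii) (the three Artin moves change the word exactly by relations (1), (2), (3) respectively) is the standard and intended route.
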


From the discovery of parity by Manturov~\cite{Ma1}, invariants for classical knots are extended to invariants for virtual knots. In~\cite{FeMa} Fedoseev and Manturov studied classical braids with structures, which are related to parity and dots on strands of classical braids. With those structures, parity can be extended to the case of braids and generalized parity can be deduced by braids with dots. The group of braids with parity and with points are defined as follows:
 
\begin{dfn}\cite{FeMa}
{\it A group of $n$-braids with parity} $Br_{n}^{2}$ is a group generated by $\{\sigma_{i}^{0}, \sigma_{i}^{1}\}_{i=1}^{n-1}$ with the following relations;

\begin{enumerate}

\item $\sigma^{\epsilon_{1}}_{i} \sigma^{\epsilon_{2}}_{j} = \sigma^{\epsilon_{2}}_{j} \sigma^{\epsilon_{1}}_{i}$ for $|i-j|>2$,
\item $\sigma_{i}^{\epsilon_{1}}\sigma_{i+1}^{\epsilon_{2}}\sigma_{i}^{\epsilon_{3}} = \sigma_{i+1}^{\epsilon_{3}}\sigma_{i}^{\epsilon_{2}}\sigma_{i+1}^{\epsilon_{1}}$, where $\epsilon_{1}+ \epsilon_{2}+\epsilon_{3} = 0$ mod $2$.
\end{enumerate}
An element in $Br_{2}^{n}$ is called {\it a $\mathbb{Z}_{2}$-braid}.
\end{dfn}

\begin{dfn}\cite{FeMa}
{\it A group of $n$-braids with dots} $Br_{n}^{2}$ is a group generated by $\{ \sigma_{i} ~|~ i \in \{1, \cdots, n-1\} \}, \{ \tau_{i} ~|~ i \in \{1, \cdots, n\} \}$ with the following relations;
\begin{enumerate}
\item $\sigma_{i}^{2} = 1$ for all $i \in \{1, \cdots n-1\}$,
\item $\sigma_{i}\sigma_{j} = \sigma_{j}\sigma_{i}$ for $|i-j| \geq2$,
\item $\sigma_{i}\sigma_{i+1}\sigma_{i} = \sigma_{i+1}\sigma_{i}\sigma_{i+1}$ for $1 \leq i \leq n-1$,
\item $\tau_{i}^{2} = 1$ for all $i \in \{1, \cdots n\}$,
\item $\tau_{i}\tau_{j} = \tau_{j}\tau_{i}$ for all $i,j \in \{1, \cdots n\}$,
\item $\tau_{i}\tau_{i+1}\sigma_{i}\tau_{i+1}\tau_{i} = \sigma_{i}$ for all $i \in \{1, \cdots n-1\}$,
\item $\sigma_{i}\tau_{k} = \tau_{k}\sigma_{i}$ for $|i-k| \geq2$.
\end{enumerate}
\end{dfn}

In the group $Br_{n}^{d}$, each generator $\tau_{i}$ is presented by a point on $i$-th strand of the braid. 
Now we define a mapping $f : Br_{n}^{2} \rightarrow Br_{n}^{d} $ by 
\begin{center}
$f(\sigma_{i}^{\epsilon})  = \left\{
\begin{array}{cc} 
     \sigma_{i} & \text{if}~\epsilon = 0, \\
      \tau_{i} \sigma_{i}\tau_{i}
 &  \text{if}~\epsilon= 1. \\
   \end{array}\right.$
   \end{center}
 In~\cite{FeMa} they proved that this mapping is injective homomorphism. Moreover the following proposition can be proved.


\begin{prop}
If two $\mathbb{Z}_{2}-$braids are equal as braids with dots, they are equal as $\mathbb{Z}_{2}-$braids.
\end{prop}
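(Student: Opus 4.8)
The assertion is precisely the injectivity of the homomorphism $f \colon Br_{n}^{2} \to Br_{n}^{d}$: the hypothesis ``equal as braids with dots'' reads $f(\alpha) = f(\beta)$, and the conclusion ``equal as $\mathbb{Z}_{2}$-braids'' reads $\alpha = \beta$. So the plan is to produce a one-sided inverse for $f$ on its image. First I would test the most direct idea, a homomorphic retraction $g \colon Br_{n}^{d} \to Br_{n}^{2}$ with $g \circ f = \mathrm{id}$; forcing $g(\sigma_{i}) = \sigma_{i}^{0}$ and $g(f(\sigma_{i}^{1})) = \sigma_{i}^{1}$ shows that $t_{i} := g(\tau_{i})$ would have to be an involution conjugating $\sigma_{i}^{0}$ to $\sigma_{i}^{1}$. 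Since these are distinct generators of $Br_{n}^{2}$, no such element is visible, so a naive retraction is unavailable and the inverse must be built only on the image $f(Br_{n}^{2})$, where the dots sit in a controlled pattern.

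The easy part, which I would record first as a sanity check, is that forgetting all dots (setting $\tau_{i}=1$) is a homomorphism onto the underlying (dotless) braid group, and that for each strand $s$ of the underlying permutation braid the number of dots on $s$ taken $\bmod\,2$ is invariant: relations (4) and (5) change it by an even amount, relation (6) deposits two dots on each of the two strands it touches, and relations (7),(1),(2),(3) involve no dots. However, this per-strand datum is only a coarse shadow of the parity and cannot by itself reconstruct the label of each individual crossing. To recover the full parity I would instead use relation (6), $\tau_{i}\tau_{i+1}\sigma_{i}\tau_{i+1}\tau_{i} = \sigma_{i}$, to normalise any dotted representative of an element of $f(Br_{n}^{2})$ into one in which every dot occurs only as a \emph{flanking pair} $\tau_{i}\sigma_{i}\tau_{i}$ around a crossing, and then read the parity of a crossing as the presence (label $1$) or absence (label $0$) of its flanking pair. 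On $f(\alpha)$ this reproduces exactly the parity of $\alpha$, so if the normalised flanking datum is an invariant of the $Br_{n}^{d}$-class it yields a map $g$ with $g\circ f = \mathrm{id}$, and injectivity follows.

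The main obstacle is this normal-form step, namely proving that the parity reading is independent of how the dots are gathered and that it respects the admissibility constraint. Concretely, a dot that is not flanking a crossing can only be slid through a crossing in the paired form $\tau_{i}\tau_{i+1}$ via relation (6), so I would prove a normal-form lemma showing that within $f(Br_{n}^{2})$ every such slide moves dots between flanking positions without changing the induced labelling, and that the interaction of flanking pairs with the third braid relation (3) transforms the labelling exactly by the passage $\sigma_{i}^{\epsilon_{1}}\sigma_{i+1}^{\epsilon_{2}}\sigma_{i}^{\epsilon_{3}} \to \sigma_{i+1}^{\epsilon_{3}}\sigma_{i}^{\epsilon_{2}}\sigma_{i+1}^{\epsilon_{1}}$ subject to $\epsilon_{1}+\epsilon_{2}+\epsilon_{3}\equiv 0 \pmod{2}$, which is precisely relation (2) of $Br_{n}^{2}$. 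Once relations (6) and (3) are handled, the commutations (1),(2),(4),(5),(7) visibly preserve the normalised flanking datum and the argument closes. Alternatively, the statement is immediate from the injectivity of $f$ established in~\cite{FeMa}; the plan above is the self-contained reconstruction of that fact in the present setting, which is what I would adapt in Sections~3 and~4 to $G_{n,p}^{2}$ and $G_{n,d}^{2}$.
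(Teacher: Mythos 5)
The paper offers no self-contained proof of this proposition at all: it simply records that $f\colon Br_{n}^{2}\to Br_{n}^{d}$ was shown by Fedoseev and Manturov in the cited work to be an injective homomorphism, and states the proposition as the immediate reformulation of that injectivity. Your closing sentence therefore reproduces the paper's entire argument, and to that extent your proposal is correct and matches the paper. The self-contained route you sketch --- normalising every dot into a flanking pair $\tau_{i}\sigma_{i}\tau_{i}$ and reading the parity of a crossing from the presence or absence of its flanking pair --- is genuinely different from what the paper does for \emph{this} proposition, but it is exactly the strategy the paper carries out in Section 3 for the analogous pair $G_{n,p}^{2}$ and $G_{n,d}^{2}$: there one restricts to the subgroup $H_{n,d}^{2}$ of words having an even number of dots on each strand, defines a reading map $\chi$ that assigns to each crossing the parity of the number of relevant dots occurring before it, proves $\chi$ well defined relation by relation (this is precisely your unproven normal-form lemma), and then checks that $\chi\circ\phi$ and $\phi\circ\chi$ are identities. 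Be aware that, as written, your sketch is a plan rather than a proof: the invariance of the flanking-pair labelling under relations (6) and (3) is the whole mathematical content, and you name it as the main obstacle without discharging it. So on its own the normal-form route is incomplete; it is rescued only by your fallback to the injectivity result of the cited paper, which is also all the paper itself does. If you want the argument to stand alone, carry out that well-definedness verification, for which the paper's lemma on $\chi$ in Section 3 is the exact model.
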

In the following sections, we study $G_{n}^{2}$ with structures which are related to parity and points algebraically.

\section{$G_{n}^{2}$ with additional structures}

In this section, we get enhancements of $G_{n}^{2}$ by adding some structures, which are related to parity and points on strands of colored pure braids. Firstly, we define a group $G_{n,p}^{2}$ and we call it {\it $G_{n}^{2}$ with parity.}

\begin{dfn}
For a positive integer $n$, define $G_{n,p}^{2}$ by the group presentation generated by $ \{ a_{ij}^{\epsilon} ~|~ \{i,j\} \subset \{1, \dots, n\}, i < j, ~\epsilon \in \{0,1\}  \}$ with the following relations;
\begin{center}\begin{enumerate}
\item $(a_{ij}^{\epsilon})^{2} = 1$, $\epsilon \in \{0,1\}$ and $i,j \in \{1, \cdots, n\}$,
\item $a_{ij}^{\epsilon_{ij}}a_{kl}^{\epsilon_{kl}} = a_{kl}^{\epsilon_{kl}}a_{ij}^{\epsilon_{ij}}$ for $1 \leq i <j<k<l \leq n$,
\item $a_{ij}^{\epsilon_{ij}}a_{ik}^{\epsilon_{ik}}a_{jk}^{\epsilon_{jk}} = a_{jk}^{\epsilon_{jk}}a_{ik}^{\epsilon_{ik}}a_{ij}^{\epsilon_{ij}}$,  for $1 \leq i <j<k \leq n$, where $\epsilon_{ij}+\epsilon_{ik}+\epsilon_{jk}=0$ mod $2$.
\end{enumerate}
\end{center}
\end{dfn}

The relations of $G_{n,p}^{2}$ are closely related to the axioms of parity \cite{Ma1}. We call $a_{ij}^{0}$($a_{ij}^{1}$) {\it an even generator}({\it an odd generator}). If a word $\beta$ in $G_{n,p}^{2}$ has no odd generators, then we call $\beta$ {\it an even word} (or {\it an even element}). 
$G_{n}^{2}$ can be considered as a subgroup of $G_{n,p}^{2}$ by a homomorphism $i$ from $G_{n}^{2}$ to $G_{n,p}^{2}$ defined by $i(a_{ij}) = a_{ij}^{0}$. It is easy to show that $i$ is well-defined. 
Moreover, the following statement can be proved.

\begin{lem}\label{emb_to_parity}
The homomorphism $i : G_{n}^{2} \rightarrow G_{n,p}^{2}$ is a monomorphism.
\end{lem}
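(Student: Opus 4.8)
The plan is to exhibit a one-sided inverse for $i$, i.e.\ to construct a homomorphism $r \colon G_{n,p}^{2} \to G_{n}^{2}$ with $r \circ i = \mathrm{id}_{G_{n}^{2}}$; the existence of such a retraction forces $i$ to be injective. The natural candidate is the map that \emph{forgets the parity}, sending both the even and the odd generator over a given pair to the single generator of $G_{n}^{2}$ indexed by that pair.

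First I would define $r$ on generators by $r(a_{ij}^{\epsilon}) = a_{ij}$ for every pair $i<j$ and every $\epsilon \in \{0,1\}$, and then check that $r$ respects the defining relations of $G_{n,p}^{2}$, so that it descends to a group homomorphism. Concretely, one verifies that each relator of $G_{n,p}^{2}$ is sent to a word that is trivial in $G_{n}^{2}$: relation (1), $(a_{ij}^{\epsilon})^{2}=1$, maps to $a_{ij}^{2}=1$, which is relation (1) of $G_{n}^{2}$; relation (2) maps to the far-commutativity relation (2) of $G_{n}^{2}$; and relation (3), $a_{ij}^{\epsilon_{ij}}a_{ik}^{\epsilon_{ik}}a_{jk}^{\epsilon_{jk}} = a_{jk}^{\epsilon_{jk}}a_{ik}^{\epsilon_{ik}}a_{ij}^{\epsilon_{ij}}$, maps to $a_{ij}a_{ik}a_{jk} = a_{jk}a_{ik}a_{ij}$, which is exactly relation (3) of $G_{n}^{2}$. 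Hence $r$ is well-defined.

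The one point worth flagging --- though it works in our favour rather than against us --- is the parity constraint $\epsilon_{ij}+\epsilon_{ik}+\epsilon_{jk} \equiv 0 \pmod{2}$ attached to relation (3) of $G_{n,p}^{2}$. This constraint \emph{restricts} which triple relations are imposed in the source, but since $r$ erases the superscripts entirely, the image of every such relation is the single unconstrained relation (3) of $G_{n}^{2}$, which always holds; thus the constraint can only make well-definedness easier, never harder. There is therefore no substantial obstacle here: the argument is the standard split-monomorphism (retraction) argument, and the only real content is the routine relation-by-relation bookkeeping above.

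Finally, once $r$ is known to be a homomorphism, I would evaluate it on the image of $i$: on each generator $r(i(a_{ij})) = r(a_{ij}^{0}) = a_{ij}$, so $r \circ i = \mathrm{id}_{G_{n}^{2}}$. A map admitting a left inverse is injective, whence $i$ is a monomorphism, as claimed.
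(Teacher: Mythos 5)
Your argument is correct, and it follows the same overall strategy as the paper: construct a retraction $G_{n,p}^{2} \to G_{n}^{2}$ that is a left inverse of $i$, and conclude that $i$ is injective. The difference is in the choice of retraction. The paper uses the projection $p$ defined by $p(a_{ij}^{0}) = a_{ij}$ and $p(a_{ij}^{1}) = 1$, i.e.\ it kills the odd generators, while you use the parity-forgetting map $r(a_{ij}^{\epsilon}) = a_{ij}$. Both compose with $i$ to the identity, since $i$ takes values in the even generators, so both complete the proof. Your choice is in fact slightly more robust at the well-definedness step: as you note, $r$ sends every instance of relation (3) of $G_{n,p}^{2}$ to the genuine relation (3) of $G_{n}^{2}$, so the parity constraint $\epsilon_{ij}+\epsilon_{ik}+\epsilon_{jk} \equiv 0 \pmod{2}$ plays no role at all. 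The paper's $p$, by contrast, genuinely needs that constraint: if a triple relation with exactly one odd exponent were imposed, say $\epsilon_{ij}=1$ and $\epsilon_{ik}=\epsilon_{jk}=0$, then $p$ would send it to $a_{ik}a_{jk} = a_{jk}a_{ik}$, which does not hold in $G_{n}^{2}$; the constraint excludes precisely these bad cases, since with zero or two odd exponents the images of the two sides reduce to identical words. So both retractions work, but yours makes the relation-by-relation bookkeeping uniform, whereas the paper's verification silently rests on the parity condition.
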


\begin{proof}
To prove this statement, consider projection map $p : G_{n,p}^{2} \rightarrow G_{n}^{2}$ defined by 
\begin{center}
$p(a_{ij}^{\epsilon})  = \left\{
\begin{array}{cc} 
     a_{ij} & \text{if}~\epsilon = 0, \\
       1
 &  \text{if}~\epsilon= 1. \\
   \end{array}\right.$
   \end{center}
It is easy to show that $p$ is well-defined function. Let $\beta$ and $\beta'$ be two words in $G_{n}^{2}$ such that $i(\beta)= i(\beta')$. By the definition of $i$, $p(i(\beta)) =\beta$ and  $p(i(\beta')) =\beta'$. That is, $\beta=\beta'$ in $G_{n}^{2}$, therefore, the proof is completed.

\end{proof}

\begin{cor}
If two words $\beta$ and $\beta'$ in $G_{n}^{2}$ are equivalent in $G_{n,p}^{2}$, then they are equivalent in $G_{n}^{2}$.
\end{cor}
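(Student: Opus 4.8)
The plan is to derive the corollary directly from Lemma~\ref{emb_to_parity}, since the statement is simply a reformulation of injectivity of $i$ in terms of the two-sided embedding of $G_{n}^{2}$ into $G_{n,p}^{2}$. First I would observe that "equivalent in $G_{n,p}^{2}$" means precisely $i(\beta) = i(\beta')$ in $G_{n,p}^{2}$, because an element of $G_{n}^{2}$ is viewed inside $G_{n,p}^{2}$ via the homomorphism $i$ (which sends every generator $a_{ij}$ to the even generator $a_{ij}^{0}$). So the hypothesis unpacks to the equality $i(\beta) = i(\beta')$.

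Next I would invoke Lemma~\ref{emb_to_parity}, which asserts that $i$ is a monomorphism, i.e.\ injective. Applying injectivity to the equality $i(\beta) = i(\beta')$ immediately yields $\beta = \beta'$ in $G_{n}^{2}$, which is exactly the conclusion "they are equivalent in $G_{n}^{2}$." In fact the proof of the lemma already exhibits the explicit retraction $p : G_{n,p}^{2} \to G_{n}^{2}$ satisfying $p \circ i = \mathrm{id}_{G_{n}^{2}}$, so one can alternatively argue by applying $p$ to both sides of $i(\beta) = i(\beta')$ to obtain $\beta = p(i(\beta)) = p(i(\beta')) = \beta'$ without even citing injectivity as a black box.

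There is essentially no obstacle here: the corollary is a trivial consequence of the lemma, and the only thing worth being careful about is the phrasing of the hypothesis. The one point I would make precise is the identification of the informal phrase "$\beta$ and $\beta'$ are equivalent in $G_{n,p}^{2}$" with the formal statement $i(\beta) = i(\beta')$; once this translation is fixed, the conclusion follows in a single line. I would therefore keep the proof to one or two sentences, either citing Lemma~\ref{emb_to_parity} directly or reusing the retraction $p$ from its proof.
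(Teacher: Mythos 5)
Your proof is correct and matches the paper's approach: the paper states this corollary immediately after Lemma~\ref{emb_to_parity} with no separate argument, precisely because it is the injectivity of $i$ restated, which the paper establishes via the retraction $p$ with $p \circ i = \mathrm{id}_{G_{n}^{2}}$. Your translation of the hypothesis into $i(\beta) = i(\beta')$ and the one-line application of injectivity (or of $p$ directly) is exactly the intended reasoning.
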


Now, we define a group $G_{n,d}^{2}$ and we call it $G_{n}^{2}$ {\it with points.}
\begin{dfn}
For a positive integer $n$, define $G_{n,d}^{2}$ by the group presentation generated by  $\{ a_{ij} ~|~ \{i,j\} \subset \{1, \dots, n\}, i < j\}$ and $\{ \tau_{i} ~|~ i \in \{1, \cdots, n\} \}$ with the following relations;
\begin{enumerate}
\item $a_{ij}^{2} = 1$ for $\{i,j\} \subset \{1, \dots, n\}, i < j$,
\item $a_{ij}a_{kl} = a_{kl}a_{ij}$ for distinct $i,j,k,l \in \{1, \dots, n\}$,
\item $a_{ij}a_{ik}a_{jk} = a_{jk}a_{ik}a_{ij}$ for distinct $i,j,k \in \{1, \dots, n\}$
\item $\tau_{i}^{2} = 1$ for $i \in \{1, \dots, n\}$,
\item $\tau_{i}\tau_{j} = \tau_{j}\tau_{i}$ for $i,j\in \{1, \dots, n\}$,
\item $\tau_{i}\tau_{j}a_{ij}\tau_{j}\tau_{i} = a_{ij}$ for $i,j \in \{1, \dots, n\}$,
\item $a_{ij}\tau_{k} = \tau_{k}a_{ij}$ for distinct $i,j,k \in \{1, \dots, n\}$.
\end{enumerate}
\end{dfn}

We call $\tau_{i}$ {\it a generator for a point on $i$-th component} or simply, {\it a point on  $i$-th component.} Geometrically, $a_{ij}$ is corresponded to a 4-valent vertex and $\tau_{i}$ is corresponded to a point on the $i$-th strand of the free braid, see Fig.~\ref{exa-cro_point}.

\begin{figure}[h!]
\begin{center}
 \includegraphics[width =8cm]{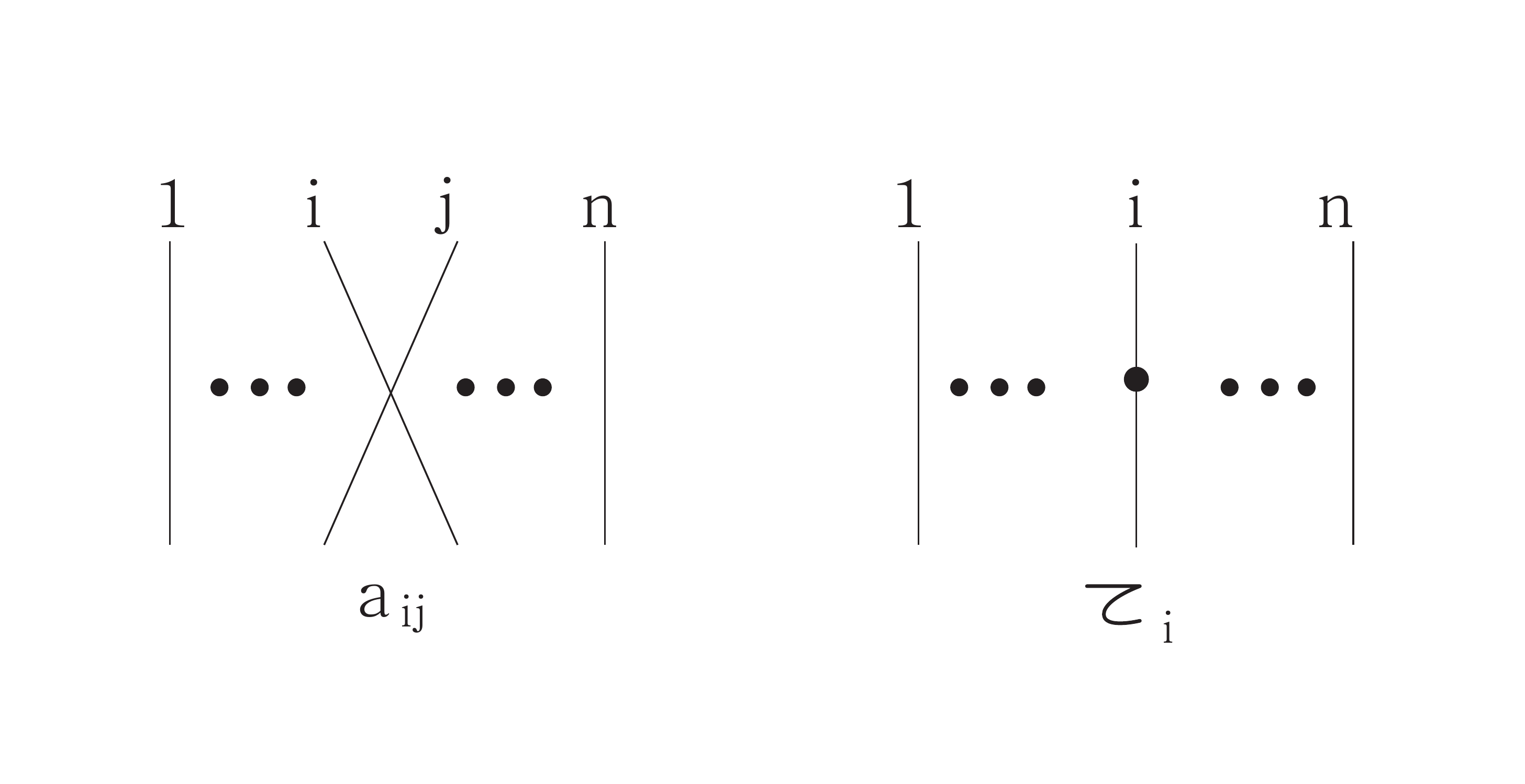}

\end{center}

 \caption{Geometrical meanigngs of $a_{ij}$ and $\tau_{i}$}\label{exa-cro_point}
\end{figure}

In fact, two groups $G_{n,p}^{2}$ and $G_{n,d}^{2}$ are closely related to each other.
Define a function $\phi$ from $G_{n,p}^{2}$ to $G_{n,d}^{2}$ by
\begin{center}
$\phi(a_{ij}^{\epsilon}) = \left\{
\begin{array}{cc} 
     a_{ij} & \text{if}~\epsilon =0, \\
    \tau_{i}a_{ij}\tau_{i}
 &  \text{if}~\epsilon =1. \\
   \end{array}\right.$
\end{center}
Geometrically, the image of $a_{ij}^{1}$ is a crossing between $i$-th and $j$-th strands of the braid with two points just before and after of the crossing on $i$-th strand, see Fig.~\ref{imageofphi}. 
\begin{figure}[h!]
\begin{center}
 \includegraphics[width =10cm]{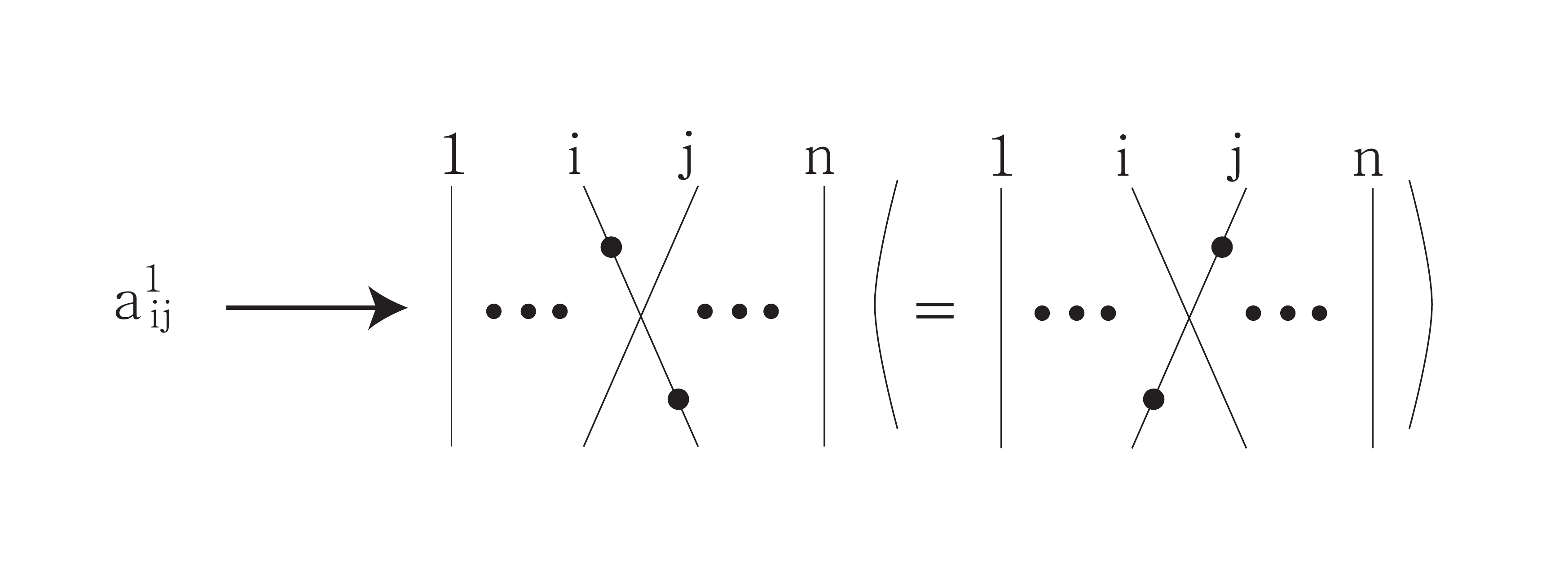}

\end{center}

 \caption{Image of $a_{ij}^{1}$}\label{imageofphi}
\end{figure}
Notice that, since we can get $\tau_{i}a_{ij}\tau_{i}=\tau_{j}a_{ij}\tau_{j}$ from the relation $\tau_{i}\tau_{j}a_{ij}\tau_{j}\tau_{i} =a_{ij}$, two diagrams with point in Fig.~\ref{imageofphi} are equivalent. On the other hand, the number of $\tau_{i}$ and $\tau_{j}$ before a given crossing $\phi(a_{ij}^{\epsilon})$ is equal to $\epsilon$ modulo $2$, because every image of $a_{kl}^{\epsilon}$ before $\phi(a_{ij}^{\epsilon})$ has two points or no points. 
\begin{lem}
The map $\phi$ from $G_{n,p}^{2}$ to $G_{n,d}^{2}$ is well defined.
\end{lem}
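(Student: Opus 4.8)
The plan is to show that $\phi$ respects each defining relation of $G_{n,p}^{2}$: that is, replacing every generator $a_{ij}^{\epsilon}$ appearing in a relator of $G_{n,p}^{2}$ by its image under $\phi$ yields a word that already equals the corresponding right-hand side in $G_{n,d}^{2}$. Once all three relation families are verified, the universal property of a group presentation guarantees that $\phi$ extends to a well-defined homomorphism. Throughout I would make repeated use of the identity $\tau_{i}a_{ij}\tau_{i}=\tau_{j}a_{ij}\tau_{j}$, which follows from relation (6) as already noted in the text; it lets me record the image of an odd generator $a_{ij}^{1}$ as either $\tau_{i}a_{ij}\tau_{i}$ or $\tau_{j}a_{ij}\tau_{j}$, whichever is convenient.

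The first two relation families are straightforward. For relation (1) with $\epsilon=1$ I would compute $(\tau_{i}a_{ij}\tau_{i})^{2}=\tau_{i}a_{ij}\tau_{i}^{2}a_{ij}\tau_{i}=\tau_{i}a_{ij}^{2}\tau_{i}=1$, using $\tau_{i}^{2}=1$ and $a_{ij}^{2}=1$; the case $\epsilon=0$ is just relation (1) of $G_{n,d}^{2}$. For the far-commutativity relation (2), with $i<j<k<l$ all distinct, every letter occurring in $\phi(a_{ij}^{\epsilon_{ij}})$ (namely $a_{ij}$ and possibly $\tau_{i}$) commutes with every letter occurring in $\phi(a_{kl}^{\epsilon_{kl}})$ (namely $a_{kl}$ and possibly $\tau_{k}$): this uses relation (2) of $G_{n,d}^{2}$ for the two crossings, relation (7) for each point against the disjoint crossing, and relation (5) for the two points. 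Hence the two images commute.

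The crux is relation (3), and I expect this to be the only step requiring genuine care. The parity constraint $\epsilon_{ij}+\epsilon_{ik}+\epsilon_{jk}\equiv 0 \pmod 2$ forces either all three exponents to vanish or exactly two of them to equal $1$. In the all-even case the image of relation (3) is literally relation (3) of $G_{n,d}^{2}$. In each of the three mixed cases the two odd generators share a common index $\ell\in\{i,j,k\}$, and the lone even generator commutes with $\tau_{\ell}$ by relation (7); I would therefore rewrite both odd images using the point $\tau_{\ell}$ via the identity above. The inner pair of points then telescopes, either directly through $\tau_{\ell}^{2}=1$ when the two odd images are adjacent, or through $\tau_{\ell}(\text{even generator})\tau_{\ell}=\text{even generator}$ when the even generator sits between them; after pushing the two surviving outer copies of $\tau_{\ell}$ to the extreme ends of the word, both sides collapse to $\tau_{\ell}\,a_{ij}a_{ik}a_{jk}\,\tau_{\ell}$ and $\tau_{\ell}\,a_{jk}a_{ik}a_{ij}\,\tau_{\ell}$ respectively. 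These agree precisely by relation (3) of $G_{n,d}^{2}$. Thus every mixed case reduces to the undecorated braid relation, and the verification is complete; the one point worth spelling out explicitly is the bookkeeping of which point index to use in each case together with its commutation with the single even generator, everything else being forced by the involutivity and commutativity of the $\tau$'s.
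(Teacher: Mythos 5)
Your proposal is correct and follows essentially the same route as the paper: verify each relation family directly, with relation (3) split by parity into the all-even case and the two-odd case, where the inner points cancel and the surviving $\tau$'s are pushed past the even generator (via relation (7)) so that both sides reduce to relation (3) of $G_{n,d}^{2}$. In fact you are slightly more thorough than the paper, which only writes out the single mixed case $\epsilon_{ij}=\epsilon_{ik}=1$, $\epsilon_{jk}=0$, whereas your use of the common index $\ell$ together with $\tau_{i}a_{ij}\tau_{i}=\tau_{j}a_{ij}\tau_{j}$ handles all three mixed cases uniformly.
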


\begin{proof}
To show that $\phi$ is well defined, it is enough to show that every relation is preserved by $\phi$. For relations $(a_{ij}^{\epsilon})^{2} = 1$, if $\epsilon =0$, then $\phi((a_{ij}^{0})^{2}) = a_{ij}^{2} = 1$. If $\epsilon =1$, then  $$\phi((a_{ij}^{1})^{2}) = \tau_{i}a_{ij}\tau_{i}\tau_{i}a_{ij}\tau_{i} = \tau_{i}a_{ij}a_{ij}\tau_{i} =  \tau_{i}\tau_{i} = 1,$$
which can be presented geometrically as Fig.~\ref{proof-lem-A2}.
\begin{figure}[h!]
\begin{center}
 \includegraphics[width =12cm]{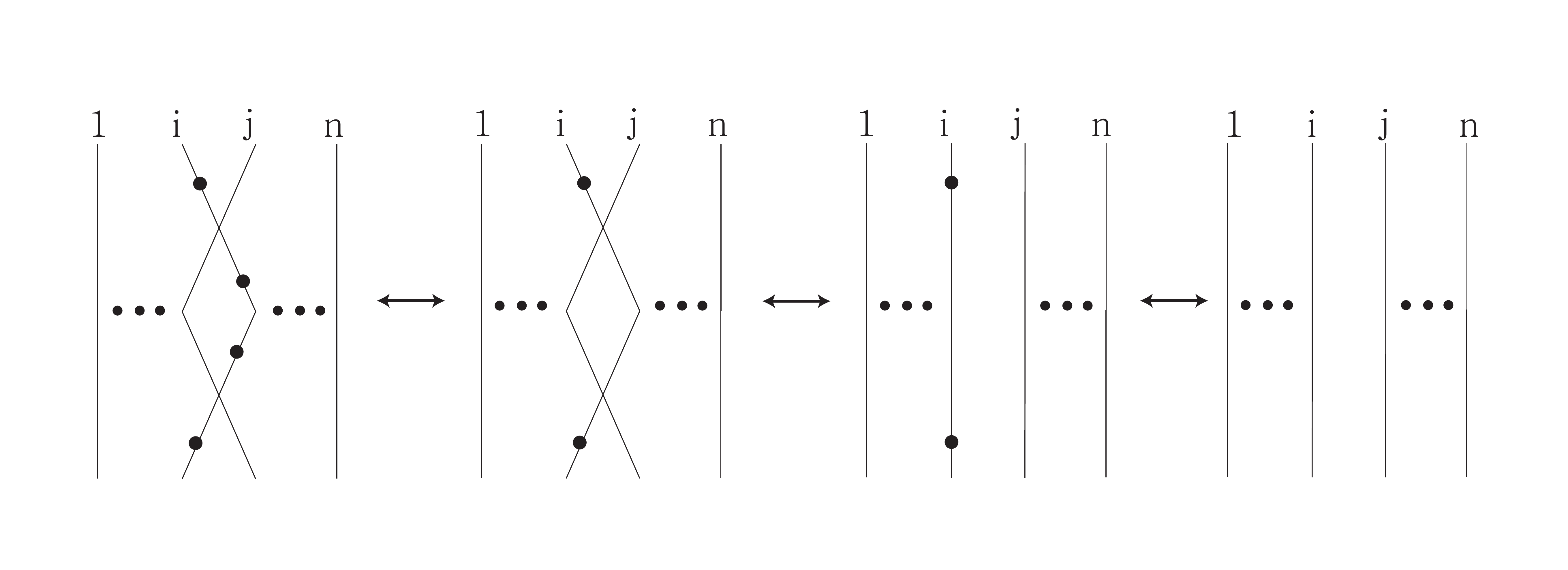}

\end{center}

 \caption{$\phi(a_{ij}^{2}) =1$}\label{proof-lem-A2}
\end{figure}
For relations $a_{ij}^{\epsilon_{ij}}a_{kl}^{\epsilon_{kl}} = a_{kl}^{\epsilon_{kl}}a_{ij}^{\epsilon_{ij}}$ with distinct $i,j,k,l$, since $i$,$j$,$k$, and $l$ are different indices, clearly the commutativity holds. For relations $a_{ij}^{\epsilon_{ij}}a_{ik}^{\epsilon_{ik}}a_{jk}^{\epsilon_{jk}} = a_{jk}^{\epsilon_{jk}}a_{ik}^{\epsilon_{ik}}a_{ij}^{\epsilon_{ij}}$, where $\epsilon_{ij}+\epsilon_{ik}+\epsilon_{jk}=0$ mod $2$, there are only two cases: all $\epsilon$'s are equal to $0$ or only two of them are equal to $1$. If every $\epsilon$ is $0$, then 
$$\phi(a_{ij}^{0}a_{ik}^{0}a_{jk}^{0}) = a_{ij}a_{ik}a_{jk} = a_{jk}a_{ik}a_{ij} = \phi( a_{jk}^{0}a_{ik}^{0}a_{ij}^{0}).$$ 
Suppose that only two of them are equal to $1$, say $\epsilon_{ij}=1$, $\epsilon_{ik}=1$ and $\epsilon_{jk}=0$. Then 

\begin{center}
$\phi(a_{ij}^{1}a_{ik}^{1}a_{jk}^{0}) = \tau_{i}a_{ij} \tau_{i} \tau_{i}a_{ik}\tau_{i}a_{jk} =  \tau_{i}a_{ij}a_{ik}\tau_{i}a_{jk} =  \tau_{i}a_{ij}a_{ik}a_{jk}\tau_{i} = \tau_{i}a_{jk}a_{ik}a_{ij}\tau_{i} = \tau_{i}a_{jk}\tau_{i} \tau_{i}a_{ik}\tau_{i}a_{ij} =\phi(a_{jk}^{0}a_{ik}^{1}a_{ij}^{1}),$
\end{center}
 which can be presented geomerically as Fig.~\ref{proof-lem-A3}.
\begin{figure}[h!]
\begin{center}
 \includegraphics[width =12cm]{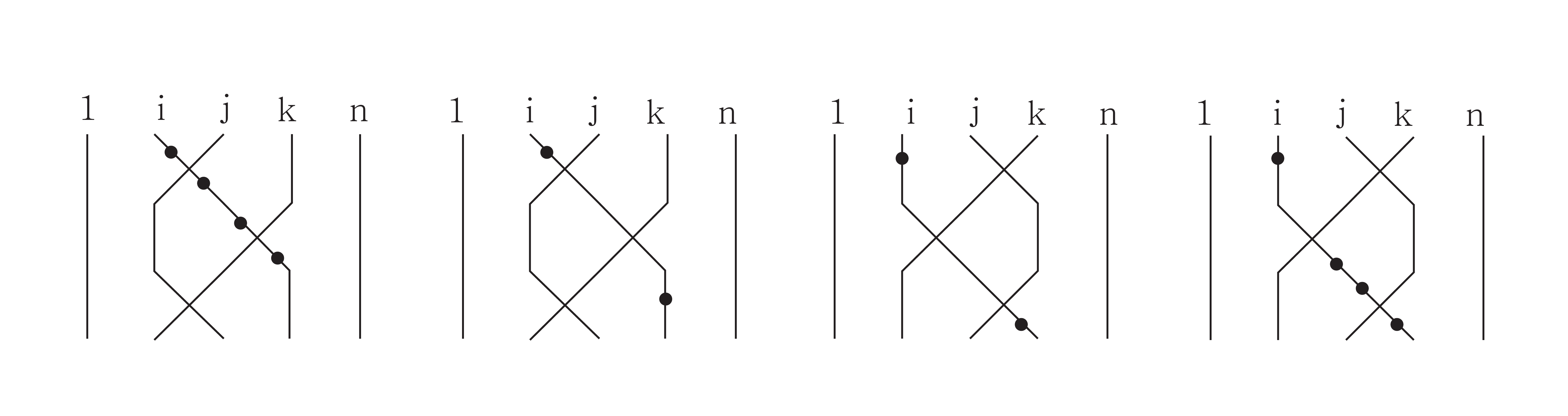}

\end{center}

 \caption{$\phi(a_{ij}^{1}a_{ik}^{1}a_{jk}^{0}) = \phi(a_{jk}^{1}a_{ik}^{1}a_{ij}^{0})$}\label{proof-lem-A3}
\end{figure}

 Therefore, $\phi$ is well-defined. 

\end{proof}

Let $H_{n,d}^{2} = \{\beta \in G_{n,d}^{2}~|~ N_{i}(\beta) = 0~\text{mod}~2, ~ i \in \{1, \cdots, n\} \}$, where $N_{i}(\beta)$ is the number of $\tau_{i}$ in $\beta$. Notice that, by the definition of $\phi$, it is clear that the image of $G_{n,p}^{2}$ by $\phi$ goes into the set $H_{n,d}^{2}$. Since every relation of $G_{n,d}^{2}$ preserves the number of $\tau$'s modulo $2$, $H_{n,d}^{2}$ is a subgroup of $G_{n,d}^{2}$. 

\begin{lem}
Let $\beta = T_{0} a_{i_{1}j_{1}}T_{2}\cdots T_{k-1}a_{i_{k}j_{k}}T_{k} \cdots T_{m-1}a_{i_{m}j_{m}}T_{m} \in H$, where $T_{k}$ is a product of $\tau$'s. Define a function $\chi : H_{n,d}^{2} \rightarrow G_{n,p}^{2}$  by 
$$ \chi(\beta) = a_{i_{1}j_{1}}^{\epsilon_{1}} \cdots a_{i_{k}j_{k}}^{\epsilon_{k}} \cdots a_{i_{m}j_{m}}^{\epsilon_{m}},$$
where $N_{i_{k}}$ is the number of $\tau_{i_{k}}$ in $T_{0}, \cdots, T_{k-1}$ and $\epsilon_{k} = N_{i_{k}} + N_{j_{k}}$. Then $\chi$ is well-defined.
\end{lem}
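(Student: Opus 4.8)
The plan is to regard $\chi$ first as a map on \emph{words}: any word $w$ in the generators of $G_{n,d}^2$ admits a unique expression $w = T_0 a_{i_1 j_1} T_1 \cdots a_{i_m j_m} T_m$ with each $T_k$ a (possibly empty) product of $\tau$'s, and reducing $\epsilon_k = N_{i_k}+N_{j_k}$ modulo $2$ makes every $a_{i_k j_k}^{\epsilon_k}$ a legitimate generator of $G_{n,p}^2$, so $\chi(w)$ is unambiguous on words. Hence the only thing to prove is that $\chi$ descends to the group $H_{n,d}^2$, i.e. that $\chi(w)=\chi(w')$ in $G_{n,p}^2$ whenever $w$ and $w'$ differ by a single application of one of the defining relations (1)--(7) of $G_{n,d}^2$ inside a larger word. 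It is convenient to attach to each crossing letter $c=a_{ij}$ in a word the parity $\epsilon(c) \equiv (\#\,\tau_i \text{ before } c)+(\#\,\tau_j \text{ before } c) \pmod 2$, so that $\chi(w)=\prod_c a_{ij}^{\epsilon(c)}$, and to check that each elementary move leaves every $\epsilon(c)$ unchanged and produces words equal in $G_{n,p}^2$.

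I would then organize the verification by the type of relation. The $\tau$-only moves, relations (4) $\tau_i^2=1$ and (5) $\tau_i\tau_j=\tau_j\tau_i$, change every prefix count $N_l$ by $0$ or $\pm 2$ and touch no crossing, so all $\epsilon(c)$ are preserved modulo $2$. Relation (7) $a_{ij}\tau_k=\tau_k a_{ij}$ with $k\notin\{i,j\}$ transposes a crossing with a point on a third strand, altering neither the $\tau_i$- nor $\tau_j$-prefix of that crossing nor the prefix of any other crossing, so $\chi$ is unchanged. Relation (6) $\tau_i\tau_j a_{ij}\tau_j\tau_i = a_{ij}$ is the key point--crossing interaction: on the left-hand side the crossing $a_{ij}$ sees two extra points $\tau_i,\tau_j$ in its prefix, so its parity changes by $1+1\equiv 0$, while every crossing to the right sees $N_i$ and $N_j$ each increased by $2$; thus all parities agree modulo $2$ and the crossing letters coincide.

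For the crossing relations I use the relations of $G_{n,p}^2$. A move of type (1)--(3) inserts or permutes crossings with no $\tau$ between them, and crossings contribute nothing to any $\tau$-prefix, so the parities of the crossings involved are identical on the two sides, all computed from the common prefix $A$. Relation (1) $a_{ij}^2=1$ yields $a_{ij}^{\epsilon}a_{ij}^{\epsilon}=1$ by relation (1) of $G_{n,p}^2$; relation (2) yields $a_{ij}^{\epsilon}a_{kl}^{\delta}=a_{kl}^{\delta}a_{ij}^{\epsilon}$ on four distinct strands by relation (2) of $G_{n,p}^2$; and relation (3) produces the two words $a_{ij}^{p+q}a_{ik}^{p+r}a_{jk}^{q+r}$ and $a_{jk}^{q+r}a_{ik}^{p+r}a_{ij}^{p+q}$, where $p,q,r$ are the numbers of $\tau_i,\tau_j,\tau_k$ in $A$. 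The main obstacle, and exactly the place where the presentation of $G_{n,p}^2$ is calibrated to match, is this last case: one must check that the three exponents satisfy the constraint required by relation (3) of $G_{n,p}^2$, and indeed $(p+q)+(p+r)+(q+r)=2(p+q+r)\equiv 0 \pmod 2$ because each of $i,j,k$ occurs in exactly two of the three crossings, so relation (3) of $G_{n,p}^2$ applies and the two sides are equal. Since every generating relation of $G_{n,d}^2$ is preserved, $\chi$ descends to $H_{n,d}^2$ and is well defined.
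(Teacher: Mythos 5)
Your proof is correct and follows essentially the same route as the paper's: define $\chi$ on words via the prefix-counts of $\tau$'s and verify that each defining relation of $G_{n,d}^{2}$ is sent to an identity in $G_{n,p}^{2}$, the key computation being that for relation (3) the three exponents sum to $2(p+q+r)\equiv 0 \pmod 2$, exactly matching the parity constraint in relation (3) of $G_{n,p}^{2}$. If anything, your write-up is slightly more complete than the paper's, since you also explicitly treat the $\tau$-only relations (4) and (5) and the effect of each move on the parities of crossings elsewhere in the word.
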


\begin{proof}
Consider $\beta = T_{0} a_{i_{1}j_{1}}T_{2}\cdots T_{k-1}a_{i_{k}j_{k}}T_{k} \cdots T_{m-1}a_{i_{m}j_{m}}T_{m} \in H$.
Assume that $a_{i_{k}j_{k}}$ contains in one of relations, which can be applied to $\beta$. If $a_{i_{k}j_{k}}$ is contains in the relation (6) $\tau_{i}\tau_{j}a_{ij}\tau_{j}\tau_{i} = a_{ij}$, then the number of  $\tau_{i_{k}}$ and $\tau_{j_{k}}$ in $T_{k-1}$ preserves modulo $2$ and then the sum of the numbers of $\tau_{i_{k}}$ and $\tau_{j_{k}}$ in $T_{0}, \cdots, T_{k-1}$ is not changed modulo $2$. Therefore $\epsilon_{k} = N_{i_{k}} + N_{j_{k}}$ is preserved modulo $2$. If $a_{i_{k}j_{k}}$ is contained in the relation (7) $\tau_{k}a_{ij} = a_{ij}\tau_{k}$, then the number of $\tau_{i_{k}}$ and $\tau_{j_{k}}$ in $T_{k-1}$ is preserved, since in relation (7) $i,j,k$ are different respectively. Therefore the number of $\tau_{i_{k}}$ and $\tau_{j_{k}}$ in $T_{0}, \cdots, T_{k-1}$ is not changed, and $\epsilon_{k} = N_{i_{k}} + N_{j_{k}}$ is preserved modulo $2$. The relation  (2) $a_{ij}a_{kl} = a_{kl}a_{ij}$ is preserved along $\chi$ by relation $a_{ij}^{\epsilon_{1}}a_{kl}^{\epsilon_{2}} = a_{kl}^{\epsilon_{2}}a_{ij}^{\epsilon_{1}}$ for different $i,j,k,l$. Suppose that $a_{i_{k}j_{k}}$ is appeared in the relation (1) $a_{ij}^{2}=1$. Then $i_{k} =i_{k+1}$, $j_{k} = j_{k+1}$ and there are no $\tau$'s in $T_{k}$. Therefore $\epsilon_{k}= \epsilon_{k+1}$ and the relation is preserved by  $(a_{ij}^{\epsilon})^{2}=1$. Suppose that $a_{i_{k}j_{k}}$ is appeared in relation (3) $a_{ij}a_{ik}a_{jk}=a_{jk}a_{ik}a_{ij}$. Suppose that $a_{i_{k}j_{k}}a_{i_{k+1}j_{k+1}}a_{i_{k+2}j_{k+2}}=a_{i_{k+2}j_{k+2}}a_{i_{k+1}j_{k+1}}a_{i_{k}j_{k}}$ and $i_{k} = i_{k+1}$, $j_{k} = i_{k+2}$ and $j_{k+1} = j_{k+2}$. It is sufficient to show that $\epsilon_{k}+\epsilon_{k+1}+\epsilon_{k+2} = 0$ modulo $2$. 
Then there are no $\tau$'s in $T_{k}$,$T_{k+1}$,$T_{k+2}$, and hence $N_{i_{k}} = N_{i_{k+1}}$, $N_{j_{k}} = N_{i_{k+2}}$, $N_{j_{k+1}} = N_{j_{k+2}}$. Therefore 
 \begin{center}
$\epsilon_{k}+\epsilon_{k+1}+\epsilon_{k+2} = N_{i_{k}}+N_{j_{k}}+N_{i_{k+1}}+N_{j_{k+1}}+N_{i_{k+2}}+N_{j_{k+2}} = 0 ~\text{mod}~2.$
\end{center}
and the proof is completed.

\end{proof}

\begin{thm}\label{cor_isoGH}
$G_{n,p}^{2}$ is isomorphic to $H_{n,d}^{2}$.
\end{thm}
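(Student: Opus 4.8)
The plan is to show that $\phi$, whose image is already known to lie in $H_{n,d}^2$, is an isomorphism onto $H_{n,d}^2$ by exhibiting $\chi$ as a two-sided inverse. So I regard $\phi\colon G_{n,p}^2\to H_{n,d}^2$ as the (well-defined) homomorphism and $\chi\colon H_{n,d}^2\to G_{n,p}^2$ as the function of the previous lemma, and I establish $\chi\circ\phi=\mathrm{id}_{G_{n,p}^2}$ and $\phi\circ\chi=\mathrm{id}_{H_{n,d}^2}$; since $\phi$ is a homomorphism, these two identities make it bijective, hence an isomorphism. The direction $\chi\circ\phi=\mathrm{id}$ is the easy one and is essentially the remark preceding the lemma. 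Given a word $w=a_{i_1j_1}^{\epsilon_1}\cdots a_{i_mj_m}^{\epsilon_m}$, the word $\phi(w)$ has crossings $a_{i_1j_1},\dots,a_{i_mj_m}$ in order, and each factor $\phi(a_{i_lj_l}^{\epsilon_l})$ contributes either no $\tau$'s (when $\epsilon_l=0$) or the pair of $\tau_{i_l}$'s straddling its crossing (when $\epsilon_l=1$). Thus every factor before the $k$-th crossing contributes an even number of $\tau$'s for each index, while the $k$-th factor itself contributes exactly $\epsilon_k$ copies of $\tau_{i_k}$ in front of the crossing; hence $N_{i_k}+N_{j_k}\equiv\epsilon_k\pmod 2$, which is precisely the exponent $\chi$ assigns, and $\chi(\phi(w))=w$. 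In particular $\phi$ is injective.

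The substantive part is $\phi\circ\chi=\mathrm{id}$. Write $\beta=T_0a_{i_1j_1}T_1\cdots T_{m-1}a_{i_mj_m}T_m\in H_{n,d}^2$. I would peel off canonical factors from the left by induction on $m$, carrying a reduced accumulated product $U$ of $\tau$'s. The three elementary moves are $\tau_k a_{ij}=a_{ij}\tau_k$ for $k\notin\{i,j\}$ (relation (7)), $\tau_i\tau_j a_{ij}=a_{ij}\tau_i\tau_j$ (immediate from relation (6)), and $\tau_i^2=1$ (relation (4)), together with commutativity (5). Reducing $U$ to a set $S$ of indices each occurring once, the parity $\epsilon_1=|S\cap\{i_1,j_1\}|\bmod 2$ coincides with $\chi$'s exponent at the first crossing. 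If $\epsilon_1=0$ then $U$ commutes past $a_{i_1j_1}$ and I extract the factor $a_{i_1j_1}=\phi(a_{i_1j_1}^0)$; if $\epsilon_1=1$, say $i_1\in S$, I split $U=\tau_{i_1}V$ with $V$ free of $\tau_{i_1}$ and $\tau_{j_1}$, commute $V$ past the crossing, and insert $\tau_{i_1}\tau_{i_1}=1$ to rewrite $U a_{i_1j_1}=\tau_{i_1}a_{i_1j_1}\tau_{i_1}\cdot U=\phi(a_{i_1j_1}^1)\cdot U$, using the symmetry $\tau_i a_{ij}\tau_i=\tau_j a_{ij}\tau_j$ to cover whichever of $i_1,j_1$ lies in $S$. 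In both cases $\beta=\phi(a_{i_1j_1}^{\epsilon_1})\cdot(U T_1)\,a_{i_2j_2}\cdots$, where $U T_1$ is the accumulated $\tau$-product in front of the next crossing, so the induction proceeds with the exponents matched to $\chi$ throughout.

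After processing all $m$ crossings I obtain $\beta=\phi(\chi(\beta))\cdot W$, where $W$ is a pure product of $\tau$'s. It remains to show $W=1$. Since $\beta\in H_{n,d}^2$ and the image of $\phi$ lies in $H_{n,d}^2$, we get $W=\phi(\chi(\beta))^{-1}\beta\in H_{n,d}^2$, so each $\tau_p$ occurs in $W$ an even number of times. Consider the homomorphism $G_{n,d}^2\to(\mathbb{Z}_2)^n$ sending $a_{ij}\mapsto 0$ and $\tau_i\mapsto e_i$; it is well-defined since every relation dies (for instance relation (6) maps to $2e_i+2e_j=0$), and it is the identity on the $\tau$-generators, so these generate a copy of $(\mathbb{Z}_2)^n$ inside $G_{n,d}^2$. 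Therefore a pure $\tau$-word all of whose index-counts are even is trivial, giving $W=1$ and $\beta=\phi(\chi(\beta))$. Combining the two directions, $\phi$ is a bijective homomorphism onto $H_{n,d}^2$, which is the assertion.

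I expect the main obstacle to be the inductive sliding argument of the second paragraph: one must check that the three elementary moves really suffice to push the accumulated $\tau$'s across each crossing, that the odd-case extraction produces exactly the canonical factor $\tau_{i_k}a_{i_kj_k}\tau_{i_k}$, and that the running exponent stays synchronized with $\chi$'s definition as the accumulated product grows. The concluding step $W=1$ is where the hypothesis $\beta\in H_{n,d}^2$ is genuinely used, and it is what forces surjectivity onto $H_{n,d}^2$ rather than onto all of $G_{n,d}^2$.
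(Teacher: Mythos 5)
Your proof is correct, and at the top level it follows the same strategy as the paper: exhibit $\chi$ as a two-sided inverse of $\phi$, with the direction $\chi\circ\phi=\mathrm{id}_{G_{n,p}^{2}}$ argued identically (all $\tau$'s contributed before the $k$-th crossing come in pairs, so the exponents agree). Where you genuinely diverge is the harder direction $\phi\circ\chi=\mathrm{id}_{H_{n,d}^{2}}$. The paper first normalizes $\beta$ into the symmetric form $\prod_{k}\tau_{i_{k}}^{\theta_{i_{k}}}\tau_{j_{k}}^{\theta_{j_{k}}}a_{i_{k}j_{k}}\tau_{i_{k}}^{\theta_{i_{k}}}\tau_{j_{k}}^{\theta_{j_{k}}}$, which forces it to use the evenness hypothesis \emph{inside} the rewriting: an ``unbalanced'' $\tau_{i}$ at one crossing must be matched with a later crossing $A_{ik}$ and cancelled nonlocally across the word (the case analysis of equations (3.1)--(3.7)); only then is $\phi\circ\chi$ checked factor by factor via relation (6). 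You instead sweep left to right, peeling off one canonical factor $\phi(a_{i_{k}j_{k}}^{\epsilon_{k}})$ per crossing while carrying a residual $\tau$-word, arriving at $\beta=\phi(\chi(\beta))\cdot W$ with $W$ a pure product of $\tau$'s, and you use the hypothesis $\beta\in H_{n,d}^{2}$ exactly once, at the very end, to kill $W$ via the homomorphism $G_{n,d}^{2}\to(\mathbb{Z}_{2})^{n}$, $a_{ij}\mapsto 0$, $\tau_{i}\mapsto e_{i}$, whose kernel is precisely $H_{n,d}^{2}$. Your version buys locality and transparency: each step uses only relations (4)--(7) acting on adjacent letters, the synchronization of exponents with $\chi$ is automatic, and the role of $H$-membership (surjectivity onto $H_{n,d}^{2}$ rather than onto all of $G_{n,d}^{2}$) is isolated in one clean algebraic step; it also supplies rigor at exactly the point where the paper's nonlocal pairing argument is only sketched. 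The paper's version buys an explicit normal form for elements of $H_{n,d}^{2}$, which may be of independent interest, but at the cost of a more delicate and less complete case analysis.
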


\begin{proof}
We will show that $\chi \circ \phi = 1_{G_{n,p}^{2}}$ and $\phi \circ \chi = 1_{H_{n,d}^{2}}$. To show that $\chi \circ \phi = 1_{G_{n,p}^{2}}$ let $\beta = a_{i_{1}j_{1}}^{\epsilon_{1}}  \cdots a_{i_{m}j_{m}}^{\epsilon_{m}} \in G_{n,p}^{2}$. Then 
\begin{center}
$\chi(\phi(\beta)) = \chi(\tau_{i_{1}}^{\epsilon_{1}}a_{i_{1}j_{1}} \tau_{i_{1}}^{\epsilon_{1}} \cdots \tau_{i_{m}}^{\epsilon_{m}}a_{i_{m}j_{m}}\tau_{i_{m}}^{\epsilon_{m}}) = a_{i_{1}j_{1}}^{\theta_{1}}  \cdots a_{i_{m}j_{m}}^{\theta_{m}}$.
\end{center}
For each $k$, the number of all $\tau$'s which are appeared from $a_{i_{1}j_{1}}^{\epsilon_{1}}  \cdots a_{i_{k-1}j_{k-1}}^{\epsilon_{m}}$ is even. Since $\theta_{k}$ is equal to the number of all $\tau_{i_{k}}$ and $\tau_{j_{k}}$ modulo $2$, $\theta_{k} = \epsilon_{k}$. Therefore $\chi \circ \phi = 1_{G_{n,p}^{2}}$.

Now we will show that $\phi \circ \chi = 1_{H_{n,d}^{2}}$. Let 

$$\beta = T_{0} a_{i_{1}j_{1}}T_{1}\cdots T_{k-1}a_{i_{k}j_{k}}T_{k} \cdots T_{m-1}a_{i_{m}j_{m}}T_{m} \in H.$$ 
Firstly we will show that $\beta$ is equivalent to an element in the form 
$$\tau_{i_{1}}^{\epsilon_{1}}a_{i_{1}j_{1}} \tau_{i_{1}}^{\epsilon_{1}} \cdots \tau_{i_{m}}^{\epsilon_{m}}a_{i_{m}j_{m}}\tau_{i_{m}}^{\epsilon_{m}}.$$ 
By the relations $\tau_{i}\tau_{j} = \tau_{j}\tau_{i}$ and $\tau_{k}a_{ij} = a_{ij}\tau_{k}$, $\beta$ is equivalent to the form $$T_{0} a_{i_{1}j_{1}}T_{2}\cdots T_{k-1}a_{i_{k}j_{k}}T_{k} \cdots T_{m-1}a_{i_{m}j_{m}}T_{m}$$
such that $T_{k}$ only has $\tau_{i_{k}}$, $\tau_{j_{k}}$, $\tau_{i_{k+1}}$ and $\tau_{j_{k+1}}$. In point of $a_{i_{k}j_{k}}$, it is a product of the following; 
$$ A_{i_{k}j_{k}} = \tau_{i_{k}}^{\theta^{f}_{i_{k}}}\tau_{j_{k}}^{\theta^{f}_{j_{k}}}a_{i_{k}j_{k}}\tau_{i_{k}}^{\theta^{b}_{i_{k}}}\tau_{j_{k}}^{\theta^{b}_{j_{k}}}.$$


Now we claim that there is an element $\beta'$ which is a product of 
$$A'_{i_{k}j_{k}} = \tau_{i_{k}}^{\theta^{f}_{i_{k}}}\tau_{j_{k}}^{\theta^{f}_{j_{k}}}a_{i_{k}j_{k}}\tau_{i_{k}}^{\theta^{b}_{i_{k}}}\tau_{j_{k}}^{\theta^{b}_{j_{k}}},$$ 
where   $\tau_{i_{k}}^{\theta^{f}_{i_{k}}} = \tau_{i_{k}}^{\theta^{b}_{i_{k}}}$ and  $\tau_{i_{k}}^{\theta^{f}_{j_{k}}} = \tau_{j_{k}}^{\theta^{b}_{i_{k}}}$. Suppose that $i$ is an index such that $A_{ij} =  \tau_{i}^{\theta^{f}_{i}}\tau_{j}^{\theta^{f}_{j}}a_{ij}\tau_{i}^{\theta^{b}_{i}}\tau_{j}^{\theta^{b}_{j}}$ is the first part in which $\theta_{i}^{f} \neq \theta_{i}^{b}$, say $\theta_{i}^{f}=1$ and $\theta_{i}^{b}=0$. Notice that if $\theta_{i}^{b}=1$ and $\theta_{i}^{f}=1$, then $\tau_{i}^{\theta_{i}^{b}}$ can be move to the very next $A_{il}$ and it is same to the case of $\theta_{i}^{f}=1$ and $\theta_{i}^{b}=0$. Since $\beta$ is in $H$, the number of $\tau_{i}$ should be even and there is $A_{ik} = \tau_{i}^{\theta'^{f}_{i}}\tau_{k}^{\theta^{f}_{k}}a_{ik}\tau_{i}^{\theta'^{b}_{i}}\tau_{k}^{\theta^{b}_{k}}$ such that $\theta'^{f}_{i} =1 $ or $\theta'^{b}_{i} =1 $ as \ref{equ_1-1} and  \ref{equ_1-2}.

  \begin{eqnarray}
  \label{equ_1-1}\beta &=& \cdots A_{ij} \cdots A_{ik} \cdots\\
     \label{equ_1-2}   &=& \cdots  \tau_{j}^{\theta^{f}_{j}}a_{ij}\tau_{i}^{\theta^{b}_{i}}\tau_{j}^{\theta^{b}_{j}} \cdots \tau_{i}^{\theta^{f}_{i}}\tau_{k}^{\theta^{f}_{k}}a_{ik}\tau_{k}^{\theta^{b}_{k}} \cdots .
       \end{eqnarray}
       
If $\theta'^{f}_{i} =1 $ and there are no $A_{il}$, then by the relations $a_{ij}\tau_{k} = \tau_{k}a_{ij}$ and $\tau_{i}\tau_{j} = \tau_{j}\tau_{i}$, $\tau_{i}$ can be move to the next of $a_{ij}$ as~\ref{equ_2-1} and \ref{equ_2-2}.

\begin{eqnarray}
\label{equ_2-1} \beta 
        &=& \cdots  \tau_{i}\tau_{j}^{\theta^{f}_{j}}a_{ij}\tau_{j}^{\theta^{b}_{j}} \cdots A_{**} \cdots \tau_{i}\tau_{k}^{\theta^{f}_{k}}a_{ik}\tau_{k}^{\theta^{b}_{k}} \cdots\\
    \label{equ_2-2}   &=& \cdots  \tau_{i}\tau_{j}^{\theta^{f}_{j}}a_{ij}\tau_{i}\tau_{j}^{\theta^{b}_{j}} \cdots A_{**} \cdots \tau_{k}^{\theta^{f}_{k}}a_{ik}\tau_{k}^{\theta^{b}_{k}} \cdots.
       \end{eqnarray}
       
If $\theta'^{f}_{i} =1 $ and there is a $A_{il}$'s, then by the relation $\tau_{i}^{2} =1$ new $\tau$'s are appeared and by the relations $a_{ij}\tau_{k} = \tau_{k}a_{ij}$ and $\tau_{i}\tau_{j} = \tau_{j}\tau_{i}$, $\tau_{i}$'s can be move to the next of $a_{ij}$ and to the front of $a_{il}$ as~\ref{equ_3-1}, \ref{equ_3-2} and \ref{equ_3-3}.
\begin{eqnarray}
\label{equ_3-1} \beta 
        &=& \cdots  \tau_{i}\tau_{j}^{\theta^{f}_{j}}a_{ij}\tau_{j}^{\theta^{b}_{j}} \cdots A_{il} \cdots \tau_{i}\tau_{k}^{\theta^{f}_{k}}a_{ik}\tau_{k}^{\theta^{b}_{k}} \cdots\\
\label{equ_3-2}       &=&  \cdots  \tau_{i}\tau_{j}^{\theta^{f}_{j}}a_{ij}\tau_{j}^{\theta^{b}_{j}} \cdots \tau_{i}\tau_{i} A_{il} \cdots \tau_{i}\tau_{k}^{\theta^{f}_{k}}a_{ik}\tau_{k}^{\theta^{b}_{k}} \cdots\\
\label{equ_3-3}       &=&  \cdots  \tau_{i}\tau_{j}^{\theta^{f}_{j}}a_{ij}\tau_{i}\tau_{j}^{\theta^{b}_{j}} \cdots \tau_{i} A_{il}\tau_{i} \cdots \tau_{k}^{\theta^{f}_{k}}a_{ik}\tau_{k}^{\theta^{b}_{k}} \cdots.
       \end{eqnarray}


 Analgously the case $\theta'^{b}_{i} =1 $ also can be solved. 
 
 Now assume that $\beta = \prod_{k=1}^{m} = \tau_{i_{k}}^{\theta_{i_{k}}}\tau_{j_{k}}^{\theta_{j_{k}}}a_{i_{k}j_{k}}\tau_{i_{k}}^{\theta_{i_{k}}}\tau_{j_{k}}^{\theta_{j_{k}}}$. 
 Then 
\begin{eqnarray*}
 \phi \circ \chi(\beta) &=& \phi \circ \chi( \prod_{k=1}^{m} = \tau_{i_{k}}^{\theta_{i_{k}}}\tau_{j_{k}}^{\theta_{j_{k}}}a_{i_{k}j_{k}}\tau_{i_{k}}^{\theta_{i_{k}}}\tau_{j_{k}}^{\theta_{j_{k}}})\\
 &=& \phi(\prod_{k=1}^{m} a_{i_{k}j_{k}}^{\theta_{i_{k}j_{k}}}) = \prod_{k=1}^{m}(\tau_{i_{k}}^{\theta_{i_{k}j_{k}}}a_{i_{k}j_{k}}\tau_{i_{k}}^{\theta_{i_{k}j_{k}}}),
\end{eqnarray*}

where $\theta_{i_{k}j_{k}} = \theta_{i_{k}}+\theta_{j_{k}}$ mod $2$. If $\theta_{i_{k}}= \theta_{j_{k}}=1$, by the relation $\tau_{i}\tau_{j}a_{ij}\tau_{j}\tau_{i} =a_{ij}$, $\beta$ and $\phi \circ \chi(\beta)$ are same elements in $G_{n,d}^{2}$. If $\theta_{i_{k}}=1$ and $ \theta_{j_{k}}=0$(or $\theta_{i_{k}}=0$ and $ \theta_{j_{k}}=1$, by the relation $\tau_{i}\tau_{j}a_{ij}\tau_{j}\tau_{i} =a_{ij}$(in other word,$ \tau_{i}a_{ij}\tau_{i}=\tau_{j}a_{ij}\tau_{j}$), $\beta$ and $\phi \circ \chi(\beta)$ are same elements in $G_{n,d}^{2}$ and hence $\phi \circ \chi = 1_{G_{n,d}^{2}}$.
\end{proof}

\section{$G_{n,d}^{2}$ and $G_{n+1}^{2}$}

In~\cite{Ma}, maps from $G_{n}^{k}$ to $G_{n-1}^{k}$ and from $G_{n}^{k}$ to $G_{n-1}^{k-1}$ are constructed. In these maps, each generator is mapped to a generator. Here we justify these maps and extended to the case of mappings from $G_{n}^{2}$ to $G_{n-1,d}^{2}$. In section 3, we obtained including maps $i : G_{n}^{2} \rightarrow G_{n,p}^{2}$ and $\phi : G_{n,p}^{2} \rightarrow G_{n,d}^{2}$. To show that they are including maps, we found right inverse of them. In this section we consider relations between $G_{n,d}^{2}$ and $G_{n+1}^{2}$.
 
Let us define $\psi$ from $G_{n+1}^{2}$ to $G_{n,d}^{2}$ by 
\begin{center}
$\psi(a_{ij}) = \left\{
\begin{array}{cc} 
     a_{ij} & \text{if}~n+1 \not\in \{i,j\}, \\
    \tau_{i}
 &  \text{if}~ j= n+1, \\
 \tau_{j} &  \text{if}~ i= n+1. \\
   \end{array}\right.$
\end{center}

Then the following lemma can be proved.
\begin{lem}\label{func(n+1)topoint}
A mapping $\psi$ from $G_{n+1}^{2}$ to $G_{n,d}^{2}$is well-defined.
\end{lem}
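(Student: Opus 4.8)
The plan is to use the standard criterion for a homomorphism defined on generators: since $G_{n+1}^{2}$ is given by a presentation, the assignment $\psi$ extends to a homomorphism precisely when every defining relation of $G_{n+1}^{2}$ is sent to a valid identity in $G_{n,d}^{2}$. So I would verify, relation by relation, that the two image words agree in $G_{n,d}^{2}$. Before doing so I would record one simplifying observation: because $n+1$ is the largest index, a generator $a_{ij}$ with $i<j$ can contain $n+1$ only in the slot $j=n+1$; hence the third branch of the definition of $\psi$ (the case $i=n+1$) never actually occurs, and whenever $n+1$ appears the image is simply $\tau_{i}$.

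The relations not involving the triple move are routine and I would dispatch them first. For relation (1), $\psi(a_{ij})^{2}=a_{ij}^{2}=1$ when $n+1\notin\{i,j\}$, while $\psi(a_{i,n+1})^{2}=\tau_{i}^{2}=1$; these are relations (1) and (4) of $G_{n,d}^{2}$. For the far-commutativity relation (2), distinctness of $i,j,k,l$ forbids both generators from containing $n+1$, so at most one image is a $\tau$. If neither contains $n+1$ I use relation (2) of $G_{n,d}^{2}$; if, say, $a_{i,n+1}$ and $a_{kl}$ occur, then I must check $\tau_{i}a_{kl}=a_{kl}\tau_{i}$, which is exactly relation (7) of $G_{n,d}^{2}$ since $i,k,l$ are distinct.

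The only substantial case, and the step I expect to be the main obstacle, is the triple relation (3). Under the convention $i<j<k$ forced by the subscript ordering, the sole way $n+1$ can enter is $k=n+1$, and then $\psi$ carries $a_{ij}a_{i,n+1}a_{j,n+1}=a_{j,n+1}a_{i,n+1}a_{ij}$ to the required identity $a_{ij}\tau_{i}\tau_{j}=\tau_{j}\tau_{i}a_{ij}$. I would deduce this from the ``point'' relation (6), namely $\tau_{i}\tau_{j}a_{ij}\tau_{j}\tau_{i}=a_{ij}$: right-multiplying by $\tau_{i}\tau_{j}$ and using $\tau_{i}^{2}=\tau_{j}^{2}=1$ collapses the tail to give $\tau_{i}\tau_{j}a_{ij}=a_{ij}\tau_{i}\tau_{j}$, after which relation (5), $\tau_{i}\tau_{j}=\tau_{j}\tau_{i}$, rewrites the left-hand side as $\tau_{j}\tau_{i}a_{ij}$, yielding $a_{ij}\tau_{i}\tau_{j}=\tau_{j}\tau_{i}a_{ij}$ exactly as needed. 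Once this computation is in place every defining relation is preserved, so $\psi$ extends to a well-defined homomorphism. The conceptual content is concentrated entirely in the interplay of relations (5) and (6), which is precisely where the geometry of a point ``recording a crossing with the $(n+1)$-th strand'' is encoded.
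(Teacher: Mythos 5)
Your proposal is correct and follows essentially the same route as the paper: verify each defining relation of $G_{n+1}^{2}$, with the commutativity case handled by relation (7) of $G_{n,d}^{2}$ and the triple relation reduced, via relations (5) and (6), to the identity $\tau_{i}\tau_{j}a_{ij}=a_{ij}\tau_{j}\tau_{i}$. In fact you spell out the derivation of this identity from relation (6) more explicitly than the paper does, which simply asserts it.
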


\begin{proof}
It is enough to show that every relation for $G_{n+1}^{2}$ is preserved by $\psi$. If every index is different with $n+1$, it is clear.
Consider a relation $a_{ij}a_{kl}=a_{kl}a_{ij}$ for distinct $i,j,k,l \in \{1, \cdots, n+1\}$. If one of $i,j,k$ and $l$ is $n+1$, say $i=n+1$, then 
$$\psi(a_{(n+1)j}a_{kl}) = \tau_{j}a_{kl}= a_{kl}\tau_{j} =\psi(a_{kl}a_{(n+1)j}).$$

 For the relations $a_{ij}a_{ik}a_{jk} = a_{jk}a_{ik}a_{ij}$, if one of $i,j$ and $k$  is $n+1$, say $i=n+1$, then 
 $$\psi( a_{ij}a_{ik}a_{jk}) = \tau_{j}\tau_{k}a_{jk} = a_{jk}\tau_{k}\tau_{j} = \psi(a_{jk}a_{ik}a_{ij}).$$
Clearly, the relation $a_{ij}^{2}=1$ is preserved along $\psi$ and the statement is proved. 
\end{proof}

Now we define inverse mapping. We define $\omega$ from $G_{n,d}^{2}$ to $G_{n+1}^{2}$ by $\omega(a_{ij}) = a_{ij}$ and $\omega(\tau_{i}) = a_{i(n+1)}$, for example, see Fig.~\ref{exa_omega}.

\begin{figure}[h!]
\begin{center}
 \includegraphics[width =10cm]{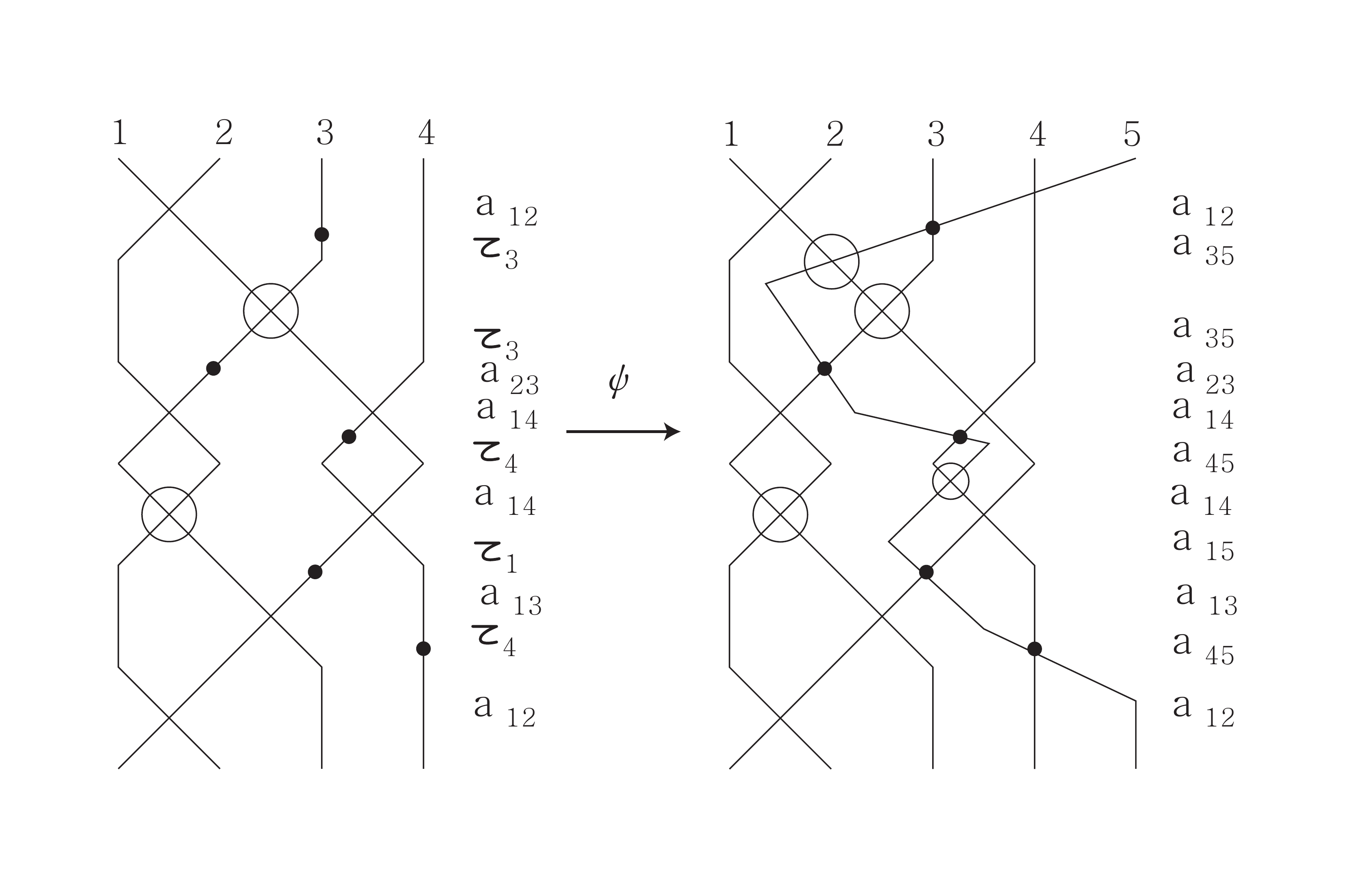}

\end{center}

 \caption{Example of a image of the mapping $\omega$}\label{exa_omega}
\end{figure}

 But this mapping is not well-defined because the relation $\tau_{i}\tau_{j} = \tau_{j}\tau_{i}$ in $G_{n,d}^{2}$ goes to $a_{i(n+1)}a_{j(n+1)}= a_{j(n+1)}a_{i(n+1)}$ but $a_{i(n+1)}$ and $a_{j(n+1)}$ are not commutative. However, we can modify the mapping and get the following lemma.
\begin{lem}
A mapping $\omega : G_{n,d}^{2} \rightarrow G_{n+1}^{2}/ \langle a_{i(n+1)}a_{j(n+1)}= a_{j(n+1)}a_{i(n+1)}\rangle$, defined by $\omega(a_{ij}) = a_{ij}$ and $\tau_{i} = a_{i(n+1)}$, is well-defined.
\end{lem}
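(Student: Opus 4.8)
The plan is to verify that the modified map $\omega$ respects every defining relation of $G_{n,d}^{2}$, now that we have passed to the quotient of $G_{n+1}^{2}$ by the extra commutativity relations $a_{i(n+1)}a_{j(n+1)} = a_{j(n+1)}a_{i(n+1)}$. Since $\omega$ is defined on generators by $\omega(a_{ij}) = a_{ij}$ and $\omega(\tau_i) = a_{i(n+1)}$, it suffices to check that the image of each of the seven relations defining $G_{n,d}^{2}$ holds in the quotient group. I would go through them one relation at a time, in the order listed in the definition of $G_{n,d}^{2}$.

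First I would dispatch the relations involving only the $a_{ij}$ (relations (1),(2),(3)): these map to the corresponding relations $a_{ij}^2=1$, $a_{ij}a_{kl}=a_{kl}a_{ij}$, and $a_{ij}a_{ik}a_{jk}=a_{jk}a_{ik}a_{ij}$ in $G_{n+1}^2$, which hold automatically since all indices remain distinct from $n+1$; these pass to the quotient unchanged. Next, the $\tau$-only relations: relation (4) $\tau_i^2=1$ maps to $a_{i(n+1)}^2 = 1$, which is exactly relation (1) of $G_{n+1}^2$; and relation (5) $\tau_i\tau_j = \tau_j\tau_i$ maps to $a_{i(n+1)}a_{j(n+1)} = a_{j(n+1)}a_{i(n+1)}$, which is precisely the relation we have quotiented by, so it holds by construction in the target. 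This is exactly the defect that forced the passage to the quotient.

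The two genuinely mixed relations are the main points to check. For relation (7) $a_{ij}\tau_k = \tau_k a_{ij}$ (distinct $i,j,k$), the image is $a_{ij}a_{k(n+1)} = a_{k(n+1)}a_{ij}$; since $i,j,k,n+1$ are four distinct indices, this is relation (2) of $G_{n+1}^2$ and holds. The most delicate is relation (6) $\tau_i\tau_j a_{ij}\tau_j\tau_i = a_{ij}$, whose image is $a_{i(n+1)}a_{j(n+1)}a_{ij}a_{j(n+1)}a_{i(n+1)} = a_{ij}$; I would verify this by using the tetrahedron/third relation (3) of $G_{n+1}^2$ applied to the triple $\{i,j,n+1\}$ together with the idempotence $a_{i(n+1)}^2 = a_{j(n+1)}^2 = 1$ and, crucially, the new commutativity $a_{i(n+1)}a_{j(n+1)} = a_{j(n+1)}a_{i(n+1)}$ available in the quotient. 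Concretely, one rewrites $a_{i(n+1)}a_{j(n+1)}a_{ij}$ using relation (3) in the form $a_{ij}a_{i(n+1)}a_{j(n+1)}$ (matching the indexing $i<j<n+1$), then cancels the trailing $a_{j(n+1)}a_{i(n+1)}$ against $a_{j(n+1)}a_{i(n+1)}$ via idempotence, leaving $a_{ij}$.

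The main obstacle is relation (6): making the algebraic manipulation come out cleanly requires being careful about the ordering of indices so that relation (3) of $G_{n+1}^2$ (stated for $i<j<k$) is applied with the correct index pattern, and it is exactly here that the quotient relation is indispensable, since without commutativity of $a_{i(n+1)}$ and $a_{j(n+1)}$ the cancellation does not close up. Once these seven checks are complete, $\omega$ descends to a well-defined homomorphism on $G_{n,d}^{2}$, completing the proof.
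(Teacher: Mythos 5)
Your proposal is correct and takes essentially the same approach as the paper: a relation-by-relation verification that the images of the seven defining relations of $G_{n,d}^{2}$ hold in $G_{n+1}^{2}/\langle a_{i(n+1)}a_{j(n+1)}=a_{j(n+1)}a_{i(n+1)}\rangle$, with the quotient relation absorbing the image of $\tau_{i}\tau_{j}=\tau_{j}\tau_{i}$. The only minor divergence is at relation (6): the paper first rewrites it as $\tau_{i}\tau_{j}a_{ij}=a_{ij}\tau_{j}\tau_{i}$ and observes that its image $a_{i(n+1)}a_{j(n+1)}a_{ij}=a_{ij}a_{j(n+1)}a_{i(n+1)}$ is already an instance of relation (3) of $G_{n+1}^{2}$ (applied to the unordered triple $\{i,j,n+1\}$), whereas you verify $a_{i(n+1)}a_{j(n+1)}a_{ij}a_{j(n+1)}a_{i(n+1)}=a_{ij}$ directly using relation (3) plus the quotient commutativity --- both routes are valid, and your attribution of relation (7) to relation (2) of $G_{n+1}^{2}$ (four distinct indices) is in fact more precise than the paper's.
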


\begin{proof}
It is sufficient to show that every relation for $G_{n,d}^{2}$ goes to trivial in $G_{n+1}^{2}/ \langle a_{i(n+1)}a_{j(n+1)}= a_{j(n+1)}a_{i(n+1)}\rangle$. For relations (1),(2),(3) and (4), it is clear. Consider relation $\tau_{i}\tau_{j} = \tau_{j}\tau_{i}$. Then
$$\omega(\tau_{i}\tau_{j}) =  a_{i(n+1)}a_{j(n+1)}= a_{j(n+1)}a_{i(n+1)} = \omega(\tau_{j}\tau_{i}).$$ 
The relation (6) $\tau_{i}\tau_{j}a_{ij}\tau_{j}\tau_{i} = a_{ij}$ can be written by $\tau_{i}\tau_{j}a_{ij} = a_{ij}\tau_{j}\tau_{i}$. Then 
$$\omega(\tau_{i}\tau_{j}a_{ij}) = a_{i(n+1)}a_{jn+2}a_{ij}=a_{ij}a_{j(n+1)}a_{i(n+1)} = \omega(a_{ij}\tau_{j}\tau_{i}).$$
 Finally, the relations (5) and (7) are preserved by the relation $a_{i(n+1)}a_{j(n+1)}= a_{j(n+1)}a_{i(n+1)}$ and the proof is completed.
\end{proof}

Geometrically the relations $a_{i(n+1)}a_{j(n+1)} = a_{j(n+1)}a_{i(n+1)}$ mean that two consecutive classical crossings pass a virtual crossing, see Fig.~\ref{forbidden_comp}. Generally, this move in this case,  is called {\it a forbidden move by $(n+1)$-th strand.}

\begin{figure}[h!]
\begin{center}
 \includegraphics[width =8cm]{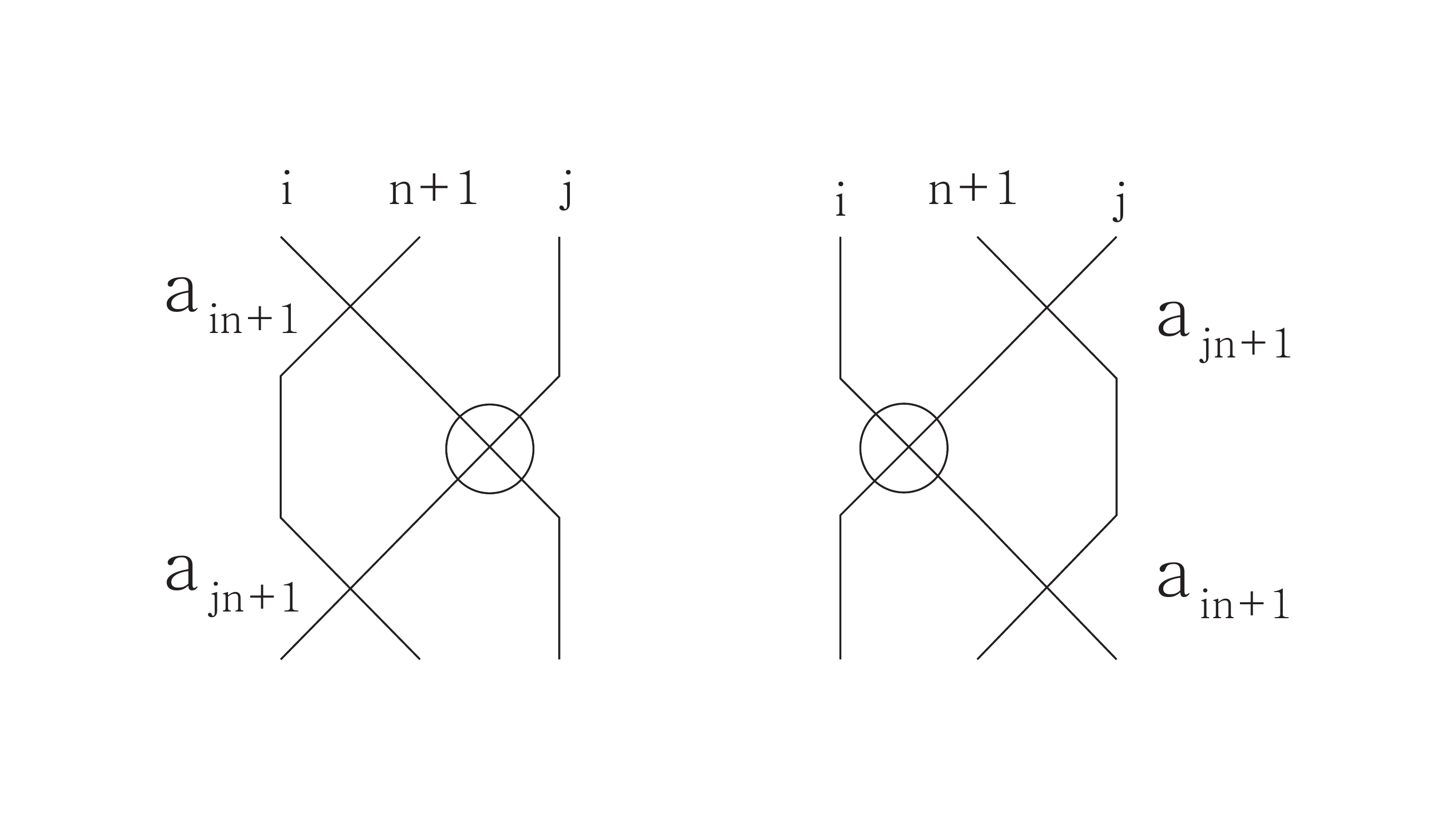}

\end{center}

 \caption{forbidden moves}\label{forbidden_comp}
\end{figure}

\begin{cor}
The groups $G_{n,d}^{2}$ and $G_{n+1}^{2}/ \langle a_{i(n+1)}a_{j(n+1)}= a_{j(n+1)}a_{i(n+1)}\rangle$ are isomorphic.
\end{cor}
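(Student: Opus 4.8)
The plan is to produce a pair of mutually inverse homomorphisms between the two groups. The map $\omega$ is already available from the preceding lemma, so it suffices to exhibit an inverse to it, and I would obtain this inverse by passing $\psi$ to the quotient. First I would note that, by Lemma~\ref{func(n+1)topoint}, $\psi$ is a well-defined homomorphism from $G_{n+1}^{2}$ to $G_{n,d}^{2}$. To see that it factors through the quotient, I evaluate $\psi$ on the relator words: since $\psi(a_{i(n+1)}) = \tau_{i}$ and $\psi(a_{j(n+1)}) = \tau_{j}$, we get
$$\psi(a_{i(n+1)}a_{j(n+1)}) = \tau_{i}\tau_{j} = \tau_{j}\tau_{i} = \psi(a_{j(n+1)}a_{i(n+1)}),$$
the middle equality being relation (5) of $G_{n,d}^{2}$. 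Hence every defining relation of the normal closure $\langle a_{i(n+1)}a_{j(n+1)} = a_{j(n+1)}a_{i(n+1)} \rangle$ maps to a trivial relation, so by the universal property of quotients $\psi$ descends to a homomorphism $\bar{\psi} : G_{n+1}^{2}/\langle a_{i(n+1)}a_{j(n+1)} = a_{j(n+1)}a_{i(n+1)}\rangle \to G_{n,d}^{2}$.

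Next I would check that $\bar{\psi}$ and $\omega$ are mutually inverse by comparing the composites on generators. For a generator $a_{ij}$ with $n+1 \notin \{i,j\}$ we have $\bar{\psi}(\omega(a_{ij})) = \bar{\psi}(a_{ij}) = a_{ij}$ and $\omega(\bar{\psi}(a_{ij})) = \omega(a_{ij}) = a_{ij}$. For a point generator $\tau_{i}$ of $G_{n,d}^{2}$ we have $\bar{\psi}(\omega(\tau_{i})) = \bar{\psi}(a_{i(n+1)}) = \tau_{i}$, and conversely, for the generator $a_{i(n+1)}$ of the quotient, $\omega(\bar{\psi}(a_{i(n+1)})) = \omega(\tau_{i}) = a_{i(n+1)}$. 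Since both composites fix a generating set of the respective group, they equal the respective identity homomorphisms. Therefore $\omega$ is an isomorphism with inverse $\bar{\psi}$, which is exactly the assertion of the corollary.

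The content of the argument is entirely carried by the two preceding lemmas, so I do not anticipate a genuine obstacle; the only new observation is that $\psi$ annihilates the relators defining the quotient, and this is immediate from the definition of $\psi$ together with the commutativity relation (5) among the $\tau$'s. The one point requiring care is the bookkeeping of the three branches in the definition of $\psi$: one must apply the branch for $j = n+1$ (so that $\psi(a_{i(n+1)}) = \tau_{i}$, using $i < n+1$) consistently, since it is precisely this identification that makes the composites act as the identity on generators rather than merely up to the quotient relation.
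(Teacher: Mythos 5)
Your proof is correct and follows essentially the same route as the paper: it pairs $\omega$ with (the descent of) $\psi$ and checks that the two composites fix the generators. In fact you are slightly more careful than the paper, which never explicitly verifies that $\psi$ kills the relators $a_{i(n+1)}a_{j(n+1)}a_{j(n+1)}^{-1}a_{i(n+1)}^{-1}$ so as to factor through the quotient (and which also swaps the labels of the two identity maps); your relation-(5) computation fills that small gap.
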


\begin{proof}
We will show that $\omega \circ \psi = 1_{G_{n,d}^{2}}$ and $\psi \circ \omega = 1_{G_{n+1}^{2}/ \langle a_{i(n+1)}a_{j(n+1)}= a_{j(n+1)}a_{i(n+1)}\rangle}$. For generators $a_{ij}$ in $G_{n+1}^{2}/ \langle a_{i(n+1)}a_{j(n+1)}= a_{j(n+1)}a_{i(n+1)}\rangle$, 
\begin{center}
$\psi(a_{ij}) =  \left\{
\begin{array}{cc} 
     a_{ij} & \text{if}~n+1 \in \{i,j\}, \\
    \tau_{i}
 &  \text{if}~ j= n+1, \\
 \tau_{j} &  \text{if}~ i= n+1. \\
   \end{array}\right.$
\end{center}
By definition of $\omega$, $\omega(\psi(a_{ij}))= a_{ij}$. Hence  $\omega \circ \psi = 1_{G_{n,d}^{2}}$. Analogously, it can be shown that $\psi \circ \omega = 1_{G_{n+1}^{2}/ \langle a_{i(n+1)}a_{j(n+1)}= a_{j(n+1)}a_{i(n+1)}\rangle}$.
\end{proof}

We proved that there are isomorphism from $G_{n,p}^{2}$ to $H_{n,d}^{2} \subset G_{n,d}^{2}$(Theorem~\ref{theorem1}) and isomorphism from $G_{n,d}^{2}$ to $G_{n+1}^{2}/  \langle a_{i(n+1)}a_{j(n+1)}= a_{j(n+1)}a_{i(n+1)}\rangle$ (Theorem~\ref{theorem2}). From those theorems, it is followed that there is a monomorphism $\omega \circ \phi $ from $G_{n,p}^{2}$ to $G_{n+1}^{2}/  \langle a_{i(n+1)}a_{j(n+1)}= a_{j(n+1)}a_{i(n+1)}\rangle$. It means that every braid of $n$ strands with parity can be presented by braid with $(n+1)$ strands up to forbidden moves by $(n+1)$-th strand. That is, braids with parity are in itself.
On the other hand, parity of braid has geometrical meaning: how $(n+1)$-th strand is knotted with area, which is surrounded by other strans, see Fig.~\ref{9_exa_parityton+1}. In Fig.~\ref{9_exa_parityton+1},
the image of $a_{ij}^{1}$ by $\omega \circ \phi $ $(n+1)$-th strand passes area in black, which is surrounded by $i$-th and $j$-th strands. Notice that the image of $a_{ij}^{1}$ by $\omega \circ \phi $ any relation in $G_{n+1}^{2}$  cannot be applied. That is, linking between $(n+1)$-th strand and the area in black cannot be disappeared. 

Conversely, for a braid $\beta$ with $(n+1)$ strands such that the number of crossings between $i$-th and $(n+1)$-th strands is even for every index $i \neq n+1$, that is, a braid of $n$ strands with parity(or with points) can be obtained from $\beta \in \psi^{-1}(H_{n,d}^{2})$ by deleting one strand.
\begin{figure}[h!]
\begin{center}
 \includegraphics[width =10cm]{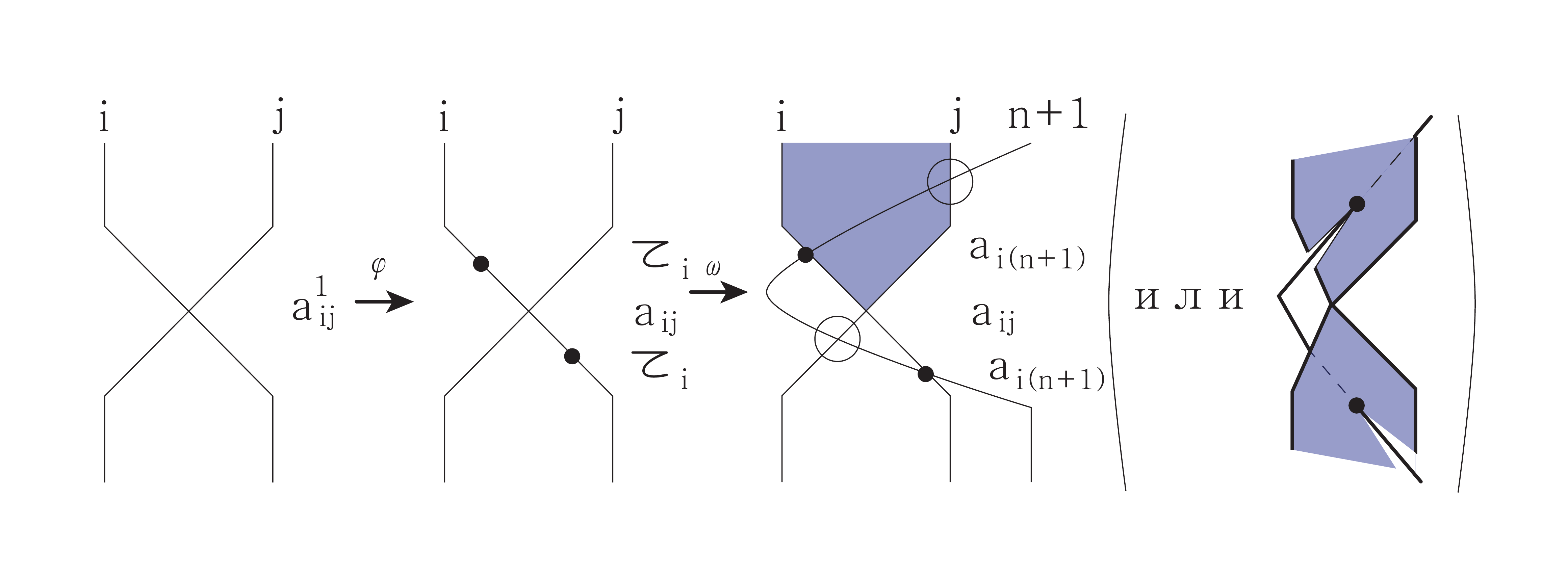}

\end{center}

 \caption{The image of $a_{ij}^{1}$ by $\omega \circ \phi$ }\label{9_exa_parityton+1}
\end{figure}
Now define a mapping $\psi_{m} : G_{n+1}^{2} \rightarrow G_{n,d}^{2}$ by
\begin{center}
$\psi_{m}(a_{ij})  = \left\{
\begin{array}{cc} 
    \tau_{i} & \text{if}~ j= n+1,~ i<m, \\
     \tau_{i-1} & \text{if}~ j= n+1, ~i>m. \\
      a_{ij} & \text{if}~ i,j<m, ~i,j \neq n+1 \\
       a_{i(j-1)} & \text{if}~ i<m, j>m, ~i,j \neq n+1\\
       a_{(i-1)(j-1)} & \text{if}~ i, j>m,~ i,j \neq n+1\\
   \end{array}\right.$
   \end{center}
Analogously $\psi_{m}$ is well-defined(see lemma~\ref{func(n+1)topoint}). Then, for each braid $\beta$ of $(n+1)$ strands such that the number of crossings between $i$-th and $m$-the strands is even for each index $i \neq m$, that is, $\beta \in \psi_{m}^{-1}(H_{n,d}^{2})$, and a braid with parity can be obtained.
\begin{exa}
Let  $\beta = a_{12}a_{23}a_{13}a_{23}a_{13}a_{23}a_{12}a_{23}$ in $G_{3}^{2}$. Note that $\psi_{m}(\beta) \in H_{2,d}^{2}$ for each index $m$. Then
\begin{itemize}
\item $\chi(\psi_{1}(\beta))=\chi(\tau_{1}a_{12}\tau_{2}a_{12}\tau_{2}a_{12}\tau_{1}a_{12}) = a_{12}^{1}a_{12}^{0}a_{12}^{1}a_{12}^{0} \neq 1,$
\item $\chi(\psi_{2}(\beta)) =\chi(\tau_{1}\tau_{2}a_{12}\tau_{2}a_{12}\tau_{2}\tau_{1}\tau_{2})=\chi(a_{12}\tau_{1}a_{12}\tau_{1})= a_{12}^{0}a_{12}^{1}  \neq 1 ,$
\item $\chi(\psi_{3}(\beta)) =\chi(a_{12}\tau_{2}\tau_{1}\tau_{2}\tau_{1}\tau_{2}a_{12}\tau_{2})= \chi(a_{12}\tau_{2}a_{12}\tau_{2})= a_{12}^{0}a_{12}^{1}  \neq 1 .$
\end{itemize}
Since $\psi_{m}$ is well defined, $\beta$ is not trivial in $G_{3}^{2}$.
\end{exa}

As the above example, for given brad $\beta$ in $\psi_{m}^{-1}(H_{n,d}^{2}) \subset G_{n+1}^{2}$, if $\chi(\psi_{m}(\beta)) \neq 1$, then $\beta$ is not trivial and $m$-th strand of $\beta$ is important to be non-trivial for $\beta$. And the following question is followed: \\
{\it Assume that $\beta$ in $\psi_{m}^{-1}(H_{n,d}^{2}) \subset G_{n+1}^{2}$ for each index $m$. If $\beta$ it non-trivial in $G_{n+1}^{2}$, then is there index $m$ such that $\chi(\psi_{m}(\beta)) \neq 1$?}

\end{document}